\numberwithin{equation}{subsection}
\theoremstyle{plain}
\newtheorem{thm}[subsection]{Theorem}
\newtheorem{prop}[subsection]{Proposition}
\newtheorem{lemma}[subsection]{Lemma}
\newtheorem{cor}[subsection]{Corollary}
\theoremstyle{definition}
\newtheorem{defn}[subsection]{Definition}
\newtheorem{notn}[subsection]{Notation}
\newtheorem{cont}[subsection]{Contents}
\newtheorem{ackn}[subsection]{Acknowledgement}
\theoremstyle{remark}
\newtheorem{rem}[subsection]{Remark}
\begin{document}
\title{On the nilpotent section conjecture for finite group actions on curves}
\author{Ambrus P\'al}
\date{August 1, 2013}
\address{Department of Mathematics, 180 Queen's Gate, Imperial College, London, SW7 2AZ, United Kingdom}
\email {a.pal@imperial.ac.uk}
\begin{abstract} We give a new, geometric proof of the section conjecture for fixed points of finite group actions on projective curves of positive genus defined over the field of complex numbers, as well as its natural nilpotent analogue. As a part of our investigations we give an explicit description of the abelianised section map for groups of prime order in this setting. We also show a version of the $2$-nilpotent section conjecture.
\end{abstract}
\footnotetext[1]{\it 2000 Mathematics Subject Classification. \rm 14H30, 14L30.}
\maketitle
\pagestyle{myheadings}
\markboth{Ambrus P\'al}{The nilpotent section cojecture for finite group actions on curves}

\section{Introduction}

For every pro-group $\pi$ let $\pi=[\pi]_0>[\pi]_1>[\pi]_2>\ldots$ denote the lower central series of $\pi$, so $[\pi]_{n+1}=\overline{[[\pi]_n,\pi]}$ is the closure of the subgroup generated by commutators of elements of $[\pi]_n$ and $\pi$. Let $[\pi]_{\infty}$ be the closed subgroup:
$$[\pi]_{\infty}=\bigcap_{n=1}^{\infty}[\pi]_n.$$
For every short exact sequence $\mathbf E$ of pro-groups
\begin{equation}
\CD
1@>>>\pi@>>>\widetilde{\pi}@>>>G@>>>1
\endCD
\end{equation}
the conjugacy class of a section $s:G\rightarrow\widetilde{\pi}$
is the set of sections of the form
$$g\mapsto\gamma s(g)\gamma^{-1}$$
where $\gamma$ is any element of $\pi$. Taking the quotient by the closed  characteristic subgroup $[\pi]_n$ (where $n\in\mathbb N$ or $n=\infty$) gives the short exact sequence $\mathbf E_n$:
\begin{equation}
\CD
1@>>>\pi/[\pi]_n@>>>\widetilde\pi/[\pi]_n@>>>G@>>>1.
\endCD
\end{equation}
Let $S(\mathbf E)$ denote the set of conjugacy classes of sections of any short exact sequence $\mathbf E$ as above, and let $f_n(\mathbf E):S(\mathbf E)\rightarrow S(\mathbf E_n)$ denote the map which assigns to the conjugacy class of a section $s$ of (1.0.1) the conjugacy class of the composition of $s$ and the quotient map $\widetilde\pi\rightarrow\widetilde\pi/[\pi]_n$.

Now let $X$ be a smooth, projective, geometrically irreducible curve of positive genus over $\mathbb C$ and let $G$ be a finite group acting on $X$. Assume that every non-trivial subgroup of $G$ acts non-trivially on the algebraic curve $X$. Let $X^G$ denote the set of fixed points of $G$ on $X$. Fix a base point $x$ of $X$ and let
\begin{equation}
\CD
1@>>>\pi_1(X,x)@>>>\pi_1(X,G,x)@>>>G@>>>1
\endCD
\end{equation}
be the short exact sequence $\mathbf E(X,G)$ furnished by the action of $G$ on $X$ where $\pi_1(X,x)$ and $\pi_1(X,G,x)$ are the topological and the orbifold fundamental group of $X$ with base point $x$, respectively. Let $\Sigma$ be a set of primes and let $\mathcal C_{\Sigma}$ denote the complete class of finite groups whose order is only divisible by primes in $\Sigma$. Let $\pi^{\Sigma}_1(X,x)$ and $\pi^{\Sigma}_1(X,G,x)$ denote the pro-$\mathcal C_{\Sigma}$ completions of the groups $\pi_1(X,x)$ and $\pi_1(X,G,x)$, respectively. Assume that every prime which divides the order of $G$ is in $\Sigma$ and let
\begin{equation}
\CD
1@>>>\pi_1^{\Sigma}(X,x)
@>>>\pi_1^{\Sigma}(X,G,x)@>>>G@>>>1
\endCD
\end{equation}
be the short exact sequence $\mathbf E^{\Sigma}(X,G)$ that we get from (1.0.3) by taking the pro-$\mathcal C_{\Sigma}$ completions. There is a natural map:
$$s_{X,G}:X^G\longrightarrow S(\mathbf E(X,G))$$
(see Definition 2.1 for a detailed construction). Let $$c^{\Sigma}_{X,G}:S(\mathbf E(X,G))
\longrightarrow S(\mathbf E^{\Sigma}(X,G))$$
be the map furnished by the $\mathcal C_{\Sigma}$-completion functor and let $s^{n,\Sigma}_{X,G}$ denote the composition $f_n(\mathbf E^{\Sigma}(X,G))\circ c^{\Sigma}_{X,G}
\circ s_{X,G}$. One of the motivations to write this paper was to give a purely geometric proof to the following
\begin{thm} The map
$$s^{\infty,\Sigma}_{X,G}:X^G\longrightarrow S(\mathbf E^{\Sigma}(X,G)_{\infty})$$
is a bijection.
\end{thm}
It is easy to reduce the theorem above to the case when $G$ is a cyclic group of prime order using the argument involving Frobenius groups in my previous paper \cite{Pa}. In this case the results follows easily from Theorem 5.11 of \cite{Pa} on page 366, which is the section conjecture for the $\mathcal C_{\Sigma}$-completion of the fundamental group of $X$. In that paper I used topological methods to prove this result. However here we will give a purely geometric proof studying the action of $G$ on the Jacobian of $X$. Our central result is Theorem 4.7 below, which describes the map $s^{1,\Sigma}_{X,G}$ completely for groups of prime order. As a consequence of this result we will also get the following 
\begin{cor} Assume that $G$ has prime order. Then the map
$$s^{1,\Sigma}_{X,G}:X^G\longrightarrow S(\mathbf E^{\Sigma}(X,G)_1)$$
is injective if and only if $|X^G|\neq2$.
\end{cor}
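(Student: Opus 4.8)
The plan is to feed the explicit description of $s^{1,\Sigma}_{X,G}$ provided by Theorem 4.7 into a short computation in the Jacobian. Write $G=\langle\sigma\rangle$ with $|G|=p$ prime, put $J=\mathrm{Jac}(X)$, and let $T_{\Sigma}J=H_1(X;\hat{\mathbb Z}^{\Sigma})$ be the $\Sigma$-adic Tate module, which is exactly the $\Sigma$-completed abelianisation $\pi_1^{\Sigma}(X,x)/[\pi_1^{\Sigma}(X,x)]_1$. Thus $S(\mathbf E^{\Sigma}(X,G)_1)$ is a torsor under $H^1(G,T_{\Sigma}J)$, and by Theorem 4.7 the map $s^{1,\Sigma}_{X,G}$ is, after fixing one point $x_0\in X^G$ as a base section, the composite $X^G\xrightarrow{\ \mathrm{AJ}\ }J^G\xrightarrow{\ \delta\ }H^1(G,T_{\Sigma}J)$, where $\mathrm{AJ}(x)=[x-x_0]$ is the Abel--Jacobi map (landing in the $G$-fixed locus because the $x_i$ are fixed) and $\delta$ is the Kummer connecting homomorphism. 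Injectivity is unaffected by the torsor translation, so it suffices to analyse this composite.

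First I would reduce injectivity of $s^{1,\Sigma}_{X,G}$ to injectivity of the induced map $\psi\colon X^G\to\pi_0(J^G)$ into the component group of the fixed locus. Since $H^1(G,T_{\Sigma}J)$ is finite while the identity component $(J^G)^0$ is divisible, $\delta$ annihilates $(J^G)^0$; and because $\pi_0(J^G)$ is a $p$-group with $p\in\Sigma$, the induced map $\pi_0(J^G)\to H^1(G,T_{\Sigma}J)$ is injective (in fact an isomorphism, both sides being canonically $\hat H^1(G,H_1(X,\mathbb Z))$). Hence $s^{1,\Sigma}_{X,G}(x_i)=s^{1,\Sigma}_{X,G}(x_j)$ if and only if $[x_i-x_j]\in(J^G)^0=\pi^{*}\mathrm{Jac}(Y)$, where $\pi\colon X\to Y=X/G$ is the quotient.

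Next I would compute $\pi_0(J^G)$ and the classes $\psi(x_i)$ in terms of the branched cover $\pi$. From $0\to H_1(X,\mathbb Z)\to H_1(X,\mathbb Z)\otimes\mathbb R\to J\to 0$ one obtains $\pi_0(J^G)=\hat H^0(G,J)\cong\hat H^1(G,H_1(X,\mathbb Z))$, and a chain-level computation on the free cover $X\setminus X^G\to Y\setminus\{y_i\}$ (with $y_i=\pi(x_i)$) identifies this with $(\mathbb Z/p)^{r-2}$, where $r=|X^G|$. The fixed points are precisely the branch points of $\pi$, each totally ramified, so $\pi^{*}y_i=p\,x_i$; thus $p[x_i-x_j]=\pi^{*}[y_i-y_j]\in(J^G)^0$ and every $\psi(x_i)$ is $p$-torsion. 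The cyclic cover is cut out by an equation $w^{p}=f$ with $\mathrm{div}_Y(f)=\sum_i a_i y_i-pD_0$, where $a_i\in\mathbb Z/p\setminus\{0\}$ is the local monodromy at $x_i$ (nonzero by the faithfulness hypothesis) and $\sum_i a_i\equiv 0\pmod p$; pulling back gives $\mathrm{div}_X(w)=\sum_i a_i x_i-\pi^{*}D_0$, whence $\sum_i a_i\psi(x_i)=0$ in $\pi_0(J^G)$. Combined with the $(\mathbb Z/p)^{r-2}$ count, the induced surjection $\bar\Psi$ from the degree-zero divisors $\{\sum c_i x_i:\sum c_i\equiv 0\}\otimes\mathbb Z/p\cong(\mathbb Z/p)^{r-1}$ onto $\pi_0(J^G)$ has one-dimensional kernel, necessarily spanned by the monodromy vector $v=(a_1,\dots,a_r)$.

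The corollary then falls out of a coordinate computation. We have $\psi(x_i)=\psi(x_j)$ with $i\neq j$ precisely when $e_i-e_j\in\langle v\rangle$ in $(\mathbb Z/p)^{X^G}$, i.e. $e_i-e_j=c\,(a_1,\dots,a_r)$ for some $c\neq 0$, where $e_i$ are the standard basis vectors. Comparing coordinates, every index $k\neq i,j$ forces $ca_k=0$; since $c\neq 0$ and all $a_k\neq 0$ this is impossible as soon as a third fixed point exists. Hence for $r\geq 3$ (and trivially for $r\leq 1$) the map is injective, whereas for $r=2$ the constraint $a_1+a_2\equiv 0$ yields $e_1-e_2=a_1^{-1}v$, so $\psi(x_1)=\psi(x_2)$ and injectivity fails; this is exactly the assertion $|X^G|\neq 2$. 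I expect the main obstacle to be establishing $\ker\bar\Psi=\langle v\rangle$ — concretely, that the fixed points surject onto $\pi_0(J^G)$, that $\hat H^1(G,H_1(X,\mathbb Z))\cong(\mathbb Z/p)^{r-2}$, and that $\delta\circ\mathrm{AJ}$ is identified with $\bar\Psi$ — all of which I anticipate being supplied by the complete description of $s^{1,\Sigma}_{X,G}$ in Theorem 4.7.
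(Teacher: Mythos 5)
Your argument is correct and follows essentially the same route as the paper: Theorem 4.7 identifies the kernel of the abelianised section map with the line in $\mathbb F_p\textrm{Div}^0(X^G)$ spanned by the local monodromy vector $(a_1,\dots,a_r)$ (the paper's $R_{X,G}$, generated by the divisor of the Kummer generator restricted to $X^G$, which has full support by Lemma 4.5), and your final coordinate comparison --- that a difference $e_i-e_j$ of two basis vectors lies on a line with full support only when $r=2$ --- is exactly the paper's Remark 4.8. The only cosmetic difference is that you pin down the kernel by a dimension count on $\pi_0(J^G)\cong(\mathbb Z/p)^{r-2}$ combined with the relation $\sum_i a_i\psi(x_i)=0$, whereas the paper computes the $G$-invariant principal divisors directly by Kummer theory; both reduce to the same facts.
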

In particular we will see that the map $s^{1,\Sigma}_{X,G}$ is neither injective nor surjective in general. It is natural to ask whether we can reconstruct the set $X^G$ using the maps $s^{1,\Sigma}_{X,G}$ and $s^{2,\Sigma}_{X,G}$, similarly to the main result of \cite{Wi}. Our theorem says that this is true when $|X^G|\geq3$ and $G$ is a $2$-group. Let
$$d^{n,\Sigma}_{X,G}:S(\mathbf E^{\Sigma}(X,G)_n)
\longrightarrow S(\mathbf E^{\Sigma}(X,G)_{n-1})$$
be the map which assigns to the conjugacy class of a section
$$s:G\longrightarrow
\pi_1^{\Sigma}(X,G,x)/[\pi_1^{\Sigma}(X,x)]_n$$
the conjugacy class of the composition of $s$ and the quotient map $$\pi_1^{\Sigma}(X,G,x)/[\pi_1^{\Sigma}(X,x)]_n\longrightarrow
\pi_1^{\Sigma}(X,G,x)/[\pi_1^{\Sigma}(X,x)]_{n-1}.$$
Let $\overline S(\mathbf E^{\Sigma}(X,G)_{n})$ denote the image of $d^{n,\Sigma}_{X,G}$. Our last main result is the following
\begin{thm} The image of the map
$$s^{1,\Sigma}_{X,G}:X^G\longrightarrow S(\mathbf E^{\Sigma}(X,G)_1)$$
is $\overline S(\mathbf E^{\Sigma}(X,G)_2)$ when $G$ is a group of order two.
\end{thm}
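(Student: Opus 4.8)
Throughout write $G=\langle\sigma\rangle$ with $\sigma^{2}=1$, and abbreviate $\pi=\pi_1^{\Sigma}(X,x)$ and $\widetilde\pi=\pi_1^{\Sigma}(X,G,x)$. Put $H=\pi/[\pi]_1$ for the level-one kernel and $M=[\pi]_1/[\pi]_2$ for the next graded piece; both are finitely generated $\mathbb Z_{\Sigma}$-modules carrying the $G$-action induced by conjugation, and $M$ is the quotient of $\wedge^{2}H$ by the line spanned by the symplectic element coming from the single defining relation of the surface group. Since $2$ divides $|G|$ we have $2\in\Sigma$, so the cohomology groups used below are finite $2$-groups.

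First I would dispose of the inclusion $\mathrm{Im}(s^{1,\Sigma}_{X,G})\subseteq\overline S(\mathbf E^{\Sigma}(X,G)_2)$, which is pure functoriality. A fixed point $P\in X^{G}$ determines one conjugacy class of sections of the full sequence $\mathbf E^{\Sigma}(X,G)$, and the maps $f_n$ carry this class compatibly to every level; in particular $s^{1,\Sigma}_{X,G}(P)=d^{2,\Sigma}_{X,G}\bigl(s^{2,\Sigma}_{X,G}(P)\bigr)$ lies in the image of $d^{2,\Sigma}_{X,G}$, which is $\overline S(\mathbf E^{\Sigma}(X,G)_2)$ by definition. The degenerate case $X^{G}=\emptyset$ (a free involution) is handled by a separate direct argument showing that both sides are then empty, so from now on I assume $X^{G}\neq\emptyset$ and fix a base class $s_0$ coming from some $P_0\in X^{G}$.

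For the reverse inclusion I would pass to the cohomological description of sections. Using $s_0$ to trivialise torsors, $S(\mathbf E^{\Sigma}(X,G)_1)$ is identified with $H^{1}(G,H)$, and the obstruction to lifting a level-one section through the extension $1\to M\to\widetilde\pi/[\pi]_2\to\widetilde\pi/[\pi]_1\to1$ defines a map $\mathrm{ob}\colon H^{1}(G,H)\to H^{2}(G,M)$ whose zero set is exactly $\overline S(\mathbf E^{\Sigma}(X,G)_2)$. This obstruction is an inhomogeneous quadratic map: its quadratic part is induced by the cup product $H^{1}(G,H)\times H^{1}(G,H)\to H^{2}(G,H\otimes H)$ composed with the commutator pairing $H\otimes H\to M$, which factors through $\wedge^{2}H\twoheadrightarrow M$. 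Because $G$ has order two, Tate periodicity supplies the explicit presentations $H^{1}(G,H)=\ker(1+\sigma)/\mathrm{im}(1-\sigma)$ and $H^{2}(G,M)=M^{\sigma}/(1+\sigma)M$, turning $\mathrm{ob}$ into a completely explicit quadratic map that can be evaluated.

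Finally I would invoke the explicit description of $s^{1,\Sigma}_{X,G}$ furnished by Theorem 4.7 to identify $\mathrm{Im}(s^{1,\Sigma}_{X,G})$ with the image of the Abel--Jacobi type map $X^{G}\to H^{1}(G,H)$, and then match this image with $\mathrm{ob}^{-1}(0)$. The inclusion $\mathrm{Im}(s^{1,\Sigma}_{X,G})\subseteq\mathrm{ob}^{-1}(0)$ is already known from the previous paragraph, so it remains to prove that every class killed by $\mathrm{ob}$ is realised by a fixed point. Here I can either compare the two explicit subsets directly, or argue by cardinality: the Corollary gives $|\mathrm{Im}(s^{1,\Sigma}_{X,G})|=|X^{G}|$ when $|X^{G}|\neq2$ and $=1$ when $|X^{G}|=2$, and it then suffices to check that $|\mathrm{ob}^{-1}(0)|$ agrees, using the Chevalley--Weil/Lefschetz computation of $\dim_{\mathbb F_2}H^{1}(G,H)$ in terms of $|X^{G}|$. \textbf{The main obstacle} I anticipate is precisely this evaluation of $\mathrm{ob}$ and the identification of its zero locus: one must control the quadratic refinement landing in $M=\wedge^{2}H/\langle\omega\rangle$ and, in particular, understand how the symplectic relation $\omega$ interacts with the $G$-action, since it is exactly this interaction that separates the genuinely geometric sections from those that merely lift abelianly. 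The value $|X^{G}|=2$, where the Abel--Jacobi map collapses two points, is the delicate boundary case flagged by the Corollary and must be checked by hand.
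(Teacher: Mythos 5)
Your framework coincides with the paper's (which in turn follows Wickelgren's proof of the $2$-nilpotent real section conjecture): reduce to the case where sections exist, use Proposition 4.4 to get a fixed point, trivialise the torsor of level-one sections by a base class so that $S(\mathbf E^{\Sigma}(X,G)_1)$ becomes $H^1(G,\pi_1^{\Sigma}(X,x)^{ab})$, and characterise $\overline S(\mathbf E^{\Sigma}(X,G)_2)$ as the zero locus of the quadratic obstruction $\delta_2$ attached to the extension by $[\pi]_1/[\pi]_2$. The easy inclusion, and the observation that the quadratic part of $\delta_2$ is the cup product composed with the commutator pairing (Zarhin's formula, Proposition 6.3), are correct. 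But the step you yourself flag as ``the main obstacle'' --- evaluating $\delta_2$ and identifying its zero locus --- is where the entire proof lives, and neither of your two suggested routes closes it. In particular the counting route is circular: to compute $\lvert\mathrm{ob}^{-1}(0)\rvert$ you must already understand the quadratic map, and a Chevalley--Weil computation only gives $\dim_{\mathbb F_2}H^1(G,H)$, not the number of zeros of an unspecified quadratic refinement. (The degenerate case is also cleaner than you make it: if $S(\mathbf E^{\Sigma}(X,G)_1)=\emptyset$ both sides are vacuously empty, and otherwise Proposition 4.4 already forces $X^G\neq\emptyset$; no separate argument about free involutions is needed.)

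The paper resolves the obstacle with three specific inputs that your proposal would need to supply. First, Theorem 4.7 gives not merely injectivity but the exact linear structure of the image: writing $X^G=\{x_0,\dots,x_{n+1}\}$ and $s_i=s^{1,\Sigma}_{X,G}(x_i)-s^{1,\Sigma}_{X,G}(x_0)$, the classes $s_1,\dots,s_n$ form a basis of $H^1(G,\pi_1^{\Sigma}(X,x)^{ab})$ and the single relation is $s_{n+1}=s_1+\cdots+s_n$; this is where $R_{X,G}$, the ramification divisor mod $2$, enters. Second, since each $\delta_2(s_i)=0$ (each $s_i$ lifts to level two because it comes from a point), Zarhin's formula gives $\delta_2(s_I)=\sum_{i<j,\ i,j\in I}s_i\vee s_j$ for every $I\subseteq\{1,\dots,n\}$, and Wickelgren's injectivity lemma (Lemma 6.5: $H^1(G,L)\wedge H^1(G,L)\to H^2(G,L\wedge L)$ is injective for $L$ without $2$-torsion) shows that $\sum_{i<j}s_i\wedge s_j$ is non-zero in $H^2(G,\pi^{ab}\wedge\pi^{ab})$ whenever $\lvert I\rvert>1$. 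Third --- and this is precisely where the interaction of the symplectic relation with the $G$-action that you worry about gets controlled --- the kernel of $H^2(G,\pi^{ab}\wedge\pi^{ab})\to H^2(G,[\pi]_1/[\pi]_2)$ has order at most $2$ (Lemma 6.7), because the relation module is pro-cyclic and torsion-free; since $\delta_2(s_{\{1,\dots,n\}})=\delta_2(s_{n+1})=0$, that unique possible kernel element is forced to be $\sum_{i<j\le n}s_i\wedge s_j$ and nothing else, whence $\delta_2(s_I)\neq 0$ for every $I$ with $1<\lvert I\rvert<n$. Only with all three inputs does the zero locus become exactly $\{s_I : I=\emptyset,\ \lvert I\rvert=1,\ \text{or } I=\{1,\dots,n\}\}$, which is the image of $X^G$. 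As it stands, your proposal correctly assembles the scaffolding but leaves the load-bearing computation as an acknowledged gap.
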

This result is closely analogous to Wickelgren's Theorem 1.1 of \cite{Wi} and its proof is also very similar. Note that for every subgroup $H\leq G$ we have $|X^H|\geq|X^G|$. Therefore by repeating the inductive proof of Proposition 2.5 in section 3, the theorem above combined with Corollary 1.2 implies the following
\begin{cor} Assume that $G$ is a $2$-group and $|X^G|\geq3$. Then the map
$$s^{1,\Sigma}_{X,G}:X^G\longrightarrow \overline S(\mathbf E^{\Sigma}(X,G)_2)$$
is a bijection.\qed
\end{cor}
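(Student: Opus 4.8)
The plan is to prove the two halves---injectivity and surjectivity---of the asserted bijection separately, running an induction on $|G|$ whose base case $|G|=2$ is supplied by Corollary 1.2 and Theorem 1.3. Throughout I write $\pi=\pi_1^{\Sigma}(X,x)$ and $A=\pi/[\pi]_1$ for its abelianisation, on which $G$ acts; the set $S(\mathbf E^{\Sigma}(X,G)_1)$ is then a torsor under $H^1(G,A)$. For every subgroup $H\leq G$ restriction of sections gives a map $\mathrm{res}^G_H\colon S(\mathbf E^{\Sigma}(X,G)_n)\to S(\mathbf E^{\Sigma}(X,H)_n)$; since the lower central series $[\pi]_\bullet$ is intrinsic to $\pi$ and independent of the overgroup, $\mathrm{res}^G_H$ is compatible with the maps $d^{n,\Sigma}$ and hence carries $\overline S(\mathbf E^{\Sigma}(X,G)_2)$ into $\overline S(\mathbf E^{\Sigma}(X,H)_2)$. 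By the construction of Definition 2.1 the section maps are functorial in the group, so that $s^{1,\Sigma}_{X,H}=\mathrm{res}^G_H\circ s^{1,\Sigma}_{X,G}$ on $X^G\subseteq X^H$. Finally, because a fixed point yields an honest section $G\to\pi_1^{\Sigma}(X,G,x)$ that reduces modulo every $[\pi]_n$, the image of $s^{1,\Sigma}_{X,G}$ lies in $\overline S(\mathbf E^{\Sigma}(X,G)_2)$, so the map in the statement is well defined. I will use repeatedly that $|X^H|\geq|X^G|\geq3$ for every subgroup $H$.

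For injectivity I would fix any subgroup $Z\leq G$ of order two. Since $|X^Z|\geq|X^G|\geq3\neq2$, Corollary 1.2 shows that $s^{1,\Sigma}_{X,Z}$ is injective. If $x_1,x_2\in X^G$ satisfy $s^{1,\Sigma}_{X,G}(x_1)=s^{1,\Sigma}_{X,G}(x_2)$, then applying $\mathrm{res}^G_Z$ and the compatibility above gives $s^{1,\Sigma}_{X,Z}(x_1)=s^{1,\Sigma}_{X,Z}(x_2)$, whence $x_1=x_2$. Thus $s^{1,\Sigma}_{X,G}$ is injective on all of $X^G$, and a fortiori as a map to $\overline S(\mathbf E^{\Sigma}(X,G)_2)$.

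The substance is surjectivity, which I would prove by induction on $|G|$. The base case $|G|=2$ is precisely Theorem 1.3, which identifies the image of $s^{1,\Sigma}_{X,G}$ with $\overline S(\mathbf E^{\Sigma}(X,G)_2)$. For the inductive step let $|G|\geq4$ and fix a maximal subgroup $H\triangleleft G$, necessarily normal of index two. Given $[s]\in\overline S(\mathbf E^{\Sigma}(X,G)_2)$, the class $\mathrm{res}^G_H[s]$ lies in $\overline S(\mathbf E^{\Sigma}(X,H)_2)$, so by the inductive hypothesis there is a unique $x\in X^H$ with $s^{1,\Sigma}_{X,H}(x)=\mathrm{res}^G_H[s]$. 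Now $G$ acts on $X^H$ and, by conjugating sections by a lift of $g\in G$, on $S(\mathbf E^{\Sigma}(X,H)_1)$; the section map is equivariant for these actions, while $\mathrm{res}^G_H[s]$ is $G/H$-invariant because $[s]$ extends over $G$ (conjugation of $s|_H$ by $s(g)$ yields a section conjugate to $s|_{gHg^{-1}}=s|_H$). Hence for every $g\in G$,
\[
s^{1,\Sigma}_{X,H}(g\cdot x)=g\cdot s^{1,\Sigma}_{X,H}(x)=g\cdot\mathrm{res}^G_H[s]=\mathrm{res}^G_H[s]=s^{1,\Sigma}_{X,H}(x),
\]
so uniqueness forces $g\cdot x=x$. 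Therefore $x$ is fixed by all of $G$, and $s^{1,\Sigma}_{X,G}(x)$ agrees with $[s]$ after restriction to $H$.

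The main obstacle, and the heart of the matter, is to upgrade this agreement on $H$ to the equality $s^{1,\Sigma}_{X,G}(x)=[s]$ in $S(\mathbf E^{\Sigma}(X,G)_1)$. The two classes differ by an element $\delta$ of $\ker(\mathrm{res}^G_H)$, which the inflation--restriction sequence identifies with the inflation of $H^1(G/H,A^H)=H^1(\mathbb Z/2,A^H)$; this need not vanish, and when $G$ is cyclic it is invisible to every proper restriction, so no subgroup-theoretic manipulation alone can remove it. The resolution must use that both classes lie in $\overline S(\mathbf E^{\Sigma}(X,G)_2)$: the class-two liftable classes form a coset of a subgroup $B\leq H^1(G,A)$, and the required vanishing $\delta=0$ amounts to $B\cap\mathrm{infl}\,H^1(\mathbb Z/2,A^H)=0$. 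Establishing this is precisely the second--nilpotent input furnished by Theorem 1.3, is the analogue of Wickelgren's computation referenced above, and is the step at which the inductive mechanism of Proposition 2.5 is reused. Granting it we get $\delta=0$, the induction closes, and $s^{1,\Sigma}_{X,G}$ is a bijection onto $\overline S(\mathbf E^{\Sigma}(X,G)_2)$.
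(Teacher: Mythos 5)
Your overall architecture---injectivity via restriction to an order-two subgroup together with Corollary 1.2, surjectivity by induction on $|G|$ through an index-two normal subgroup with base case Theorem 1.3, and the conjugation trick of Proposition 2.5 to promote the fixed point from $X^H$ to $X^G$---is exactly the route the paper intends (its proof is literally ``repeat the inductive proof of Proposition 2.5, using Theorem 1.3 and Corollary 1.2''). The well-definedness remark, the injectivity argument, and the production of a $G$-fixed point $x$ with $\mathrm{res}^G_H(s^{1,\Sigma}_{X,G}(x))=\mathrm{res}^G_H[s]$ are all correct.

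However, the last step is a genuine gap, and you say so yourself: you never prove $\delta=0$, you only ``grant'' it. Moreover the two reasons you offer for why it should hold do not work as stated. First, the classes in $S(\mathbf E^{\Sigma}(X,G)_1)$ that lift to level $2$ do \emph{not} form a coset of a subgroup $B\leq H^1(G,\pi_1^{\Sigma}(X,x)^{ab})$: by Proposition 6.3 the obstruction satisfies $\delta_2(y+z)=\delta_2(y)+\delta_2(z)+y\vee z$, so its vanishing locus is a quadric rather than a coset, and the condition ``$B\cap\mathrm{infl}\,H^1(\mathbb Z/2,A^H)=0$'' is not the statement you actually need. Second, Theorem 1.3 concerns groups of order two only; it gives no control over $\ker(\mathrm{res}^G_H)$ inside $H^1(G,A)$ once $|G|\geq 4$. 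Already for $G=\mathbb Z/4$ and $H$ its unique subgroup of order two this kernel is $H^1(G/H,A^H)$, which is typically nonzero for a lattice $A$ with involution, so the discrepancy $\delta$ cannot be dismissed. To be fair, the paper's own one-line proof does not spell this step out either; but a complete argument must either show that the restriction $\overline S(\mathbf E^{\Sigma}(X,G)_2)\rightarrow \overline S(\mathbf E^{\Sigma}(X,H)_2)$ is injective, or control $\delta_2$ on the inflation subgroup, and neither is done here. As written, your proposal establishes injectivity and the existence of a $G$-fixed point in the fibre over $\mathrm{res}^G_H[s]$, but not surjectivity onto $\overline S(\mathbf E^{\Sigma}(X,G)_2)$.
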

\begin{cont} In the next chapter we give the definition of the section map and we deduce Theorem 1.1 from the special case when $G$ has prime order. In the third section we prove the section conjecture for abelian varieties using geometric methods. We study the induced action of $G$ on the Picard scheme when $G$ has prime order, and as an application we compute the map $s^{1,\Sigma}_{X,G}$ rather explicitly in the fourth section. The main result of this section, Theorem 4.7, looks classical, but we could not find a reference. We complete the proof of Theorem 1.1 in the fifth section using Tamagawa's trick and some simple group-theoretical arguments. We use cohomological obstruction theory to show Theorem 1.3 in the last section.
\end{cont}
\begin{ackn} The author was partially supported by the EPSRC grant P36794. I also wish to thank the referee for many useful comments and pointing out some errors in the original version.
\end{ackn}

\section{Reduction to the case of a group of prime order}

\begin{defn} In this paper by a topological space we always mean a Hausdorff topological space. For every topological space $Y$ let Aut$(Y)$ denote the group of its self-homeomorphisms. Let $X$ be a connected locally contractible topological space. Let $G$ be a finite group and assume that a continuous left action $h:G\times X\rightarrow X$ of $G$ on $X$ is given. Let $x$ be a point of $X$ and let $(\widetilde X,\widetilde x)$ be the universal cover of the pointed space $(X,x)$. Let $\pi:\widetilde X\rightarrow X$ be the covering map (with the property $\pi(\widetilde x)=x$). We define the
orbifold fundamental group $\pi_1(X,G,x)$ to be the following subgroup of Aut$(\widetilde X)\times G$:
$$\pi_1(X,G,x)=\{(\phi,g)\in\text{Aut}(\widetilde X)\times G|
\pi\circ\phi=h(g)\circ\pi\}.$$
By definition the group $\pi_1(X,G,x)$ acts on $\widetilde X$. Moreover the fundamental group $\pi_1(X,x)$, considered as the group of deck transformations of the cover $\pi:\widetilde X\rightarrow X$, is a subgroup of $\pi_1(X,G,x)$ via the map $\phi\mapsto(\phi,1)$. Because every continuous map $X\rightarrow X$ can be lifted by the universal property of the covering map $\pi$ we have an exact sequence $\mathbf E(X,G)$:
\begin{equation}
\CD1@>>>\pi_1(X,x)@>>>\pi_1(X,G,x)@>\psi_{X,G}>>G@>>>1.\endCD
\end{equation}
Let $X^G$ denote the set of fixed points of the action of $G$ on $X$. Let $y\in X^G$ and choose a $\widetilde y\in\widetilde X$ such that $\pi(\widetilde y)=y$. For every $g\in G$ there is a unique $h_{\widetilde y}(g)\in\pi_1(X,G,x)$ which fixes $\widetilde y$ and $\psi_{X,G}(h_{\widetilde y}(g))=g$. The map $h_{\widetilde y}:G\rightarrow\pi_1(X,G,x)$ is a homomorphism which is also a section of the exact sequence (2.1.1) by construction. Let $\widetilde y'\in\widetilde X$ be another point such that $\pi(\widetilde y')=y$. Then there is a unique $\gamma\in\pi_1(X,x)$ such that $\widetilde y'=\gamma(\widetilde y)$. Clearly $h_{\widetilde y'}=\gamma^{-1}h_{\widetilde y}\gamma$. By the above the conjugacy class of $h_{\widetilde y}$ does not depend on the choice of the lift $\widetilde y$. Hence we get a well-defined map:
$$s_{X,G}:X^G\longrightarrow S(\mathbf E(X,G))$$
which associates to $y$ the conjugacy class of $h_{\widetilde y}$. 
\end{defn}
\begin{lemma} The map $s_{X,G}$ is constant on the path-connected components of $X^G$.
\end{lemma}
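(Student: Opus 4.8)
The plan is to connect two fixed points lying in the same path-component by a path inside $X^G$, lift that path to the universal cover, and then exploit the uniqueness of path lifting to show that the associated sections can be made literally \emph{equal} (not merely conjugate) once the lifts of the two endpoints are chosen compatibly. This immediately yields the equality of conjugacy classes.

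Concretely, suppose $y_0,y_1\in X^G$ lie in the same path-connected component and let $\gamma\colon[0,1]\to X^G$ be a path with $\gamma(0)=y_0$ and $\gamma(1)=y_1$, regarded as a path in $X$. Since $X$ is connected and locally contractible, $\pi\colon\widetilde X\to X$ is a covering map and enjoys the unique path-lifting property. I would choose a lift $\widetilde y_0$ of $y_0$, let $\widetilde\gamma\colon[0,1]\to\widetilde X$ be the unique lift of $\gamma$ with $\widetilde\gamma(0)=\widetilde y_0$, and set $\widetilde y_1:=\widetilde\gamma(1)$, a lift of $y_1$.

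Now fix $g\in G$ and write $h_{\widetilde y_0}(g)=(\phi,g)$, so that $\phi\in\text{Aut}(\widetilde X)$ is the unique lift of $h(g)$ fixing $\widetilde y_0$. The heart of the argument is that $\phi\circ\widetilde\gamma$ is again a lift of $\gamma$: indeed $\pi(\phi(\widetilde\gamma(t)))=h(g)(\pi(\widetilde\gamma(t)))=h(g)(\gamma(t))=\gamma(t)$, where the last equality uses that $\gamma(t)\in X^G$ is fixed by $g$. Since $\phi\circ\widetilde\gamma$ and $\widetilde\gamma$ are both lifts of $\gamma$ agreeing at $t=0$ (because $\phi(\widetilde y_0)=\widetilde y_0$), the uniqueness of path lifting forces $\phi(\widetilde\gamma(t))=\widetilde\gamma(t)$ for all $t$. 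In particular $\phi$ fixes $\widetilde y_1$, so by the defining uniqueness of $h_{\widetilde y_1}(g)$ we get $(\phi,g)=h_{\widetilde y_1}(g)$. As $g$ was arbitrary, $h_{\widetilde y_0}=h_{\widetilde y_1}$ as homomorphisms $G\to\pi_1(X,G,x)$; and since $s_{X,G}(y_1)$ is, by Definition 2.1, the conjugacy class of $h_{\widetilde y_1}$ for any lift of $y_1$, we conclude $s_{X,G}(y_0)=s_{X,G}(y_1)$.

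I expect the only delicate point to be the correct invocation of path lifting for the space $X$, which is guaranteed by the standing assumption that $X$ is connected and locally contractible; everything else is a direct consequence of the fixed-point condition $\gamma(t)\in X^G$, which is precisely what makes $\phi\circ\widetilde\gamma$ project back onto $\gamma$ itself rather than onto its translate under $h(g)$.
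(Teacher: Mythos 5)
Your argument is correct. The paper itself offers no proof of this lemma, only a citation to Lemma 2.2 of \cite{Pa}, so your write-up actually supplies a self-contained argument where the paper defers to a reference; the path-lifting proof you give is the standard one and is almost certainly what the cited lemma does. The key step --- that $\phi\circ\widetilde\gamma$ is again a lift of $\gamma$ \emph{itself} (not of its translate) precisely because $\gamma$ stays inside $X^G$, so that unique path lifting forces $\phi$ to fix $\widetilde y_1$ --- is exactly right, and it correctly yields the stronger conclusion that the sections $h_{\widetilde y_0}$ and $h_{\widetilde y_1}$ are literally equal for compatibly chosen lifts, not just conjugate. One minor remark: unique path lifting holds for arbitrary covering maps, so local contractibility is needed only to guarantee that the universal cover exists in the first place; your phrasing slightly conflates the two, but nothing in the argument depends on this.
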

\begin{proof} This is Lemma 2.2 of \cite{Pa} on page 355.
\end{proof}
\begin{defn} Let $\Sigma$ be a set of primes and let $\mathcal C_{\Sigma}$ denote the complete class introduced in the introduction. For every group $H$ let $H^{\Sigma}$ denote the $\mathcal C_{\Sigma}$-completion of $H$ (for the definition of the latter see page 26 of \cite{AM}). Moreover let $\pi^{\Sigma}_1(X,x)$ and $\pi^{\Sigma}_1(X,G,x)$ denote the profinite groups $\pi_1(X,x)^{\Sigma}$ and $\pi_1(X,G,x)^{\Sigma}$, respectively. By applying the $\mathcal C_{\Sigma}$-completion functor to (2.1.1) we get the following exact sequence $\mathbf E^{\Sigma}(X,G)$:
$$\CD1@>>>\pi^{\Sigma}_1(X,x)@>>>
\pi^{\Sigma}_1(X,G,x)@>>>G@>>>1\endCD$$ 
when $G$ is in $\mathcal C_{\Sigma}$. The $\mathcal C_{\Sigma}$-completion induces a natural map
$$c^{\Sigma}_{X,G}:S(\mathbf E(X,G))
\rightarrow S(\mathbf E^{\Sigma}(X,G)).$$
For every topological space $M$ let $\pi_0(M)$ denote the set of connected components of $M$. By the above when $X^G$ is locally path-connected for every $n\in\mathbb N$ there is a well-defined map:
$$s^{n,\Sigma}_{X,G}:\pi_0(X^G)\longrightarrow S(\mathbf E^{\Sigma}(X,G)_n)$$
which sends every connected component $C$ to the class $f_n(\mathbf E^{\Sigma}(X,G))(c^{\Sigma}_{X,G}(s_{X,G}(y)))$ where $y\in C$ is arbitrary.
\end{defn}
\begin{notn} For every algebraic variety $X$ over $\Bbb C$ let the symbol $X$ also denote the set of $\mathbb C$-valued points of $X$ equipped with the analytic topology by slight abuse of notation. Let $X$ be as above and assume that $G$ is a finite group acting on $X$. Let $X^G$ denote the variety of fixed points of the action of $G$ on $X$. Since algebraic varieties are locally path-connected for the analytic topology we have a well-defined map:
$$s^{n,\Sigma}_{X,G}:\pi_0(X^G)\longrightarrow S(\mathbf E^{\Sigma}(X,G)_n)$$
by the above. 
\end{notn}
Let $X$ again be a smooth, projective, geometrically irreducible curve of positive genus over $\mathbb C$ and let $G$ be a finite group acting on $X$. Assume that every non-trivial subgroup of $G$ acts non-trivially on the algebraic curve $X$. In this case every connected component of $X^G$ consists of one point. We will identify the sets $X^G$ and $\pi_0(X^G)$ in all that follows. 
\begin{prop} Assume that Theorem 1.1 holds when $G$ is a cyclic group of prime order. Then it holds in general.
\end{prop}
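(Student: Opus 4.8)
The plan is to prove Theorem 1.1 for every finite $G$ by induction on the order $|G|$, taking the assumed prime-order case (together with the trivial case $G=1$) as the base of the induction. The single device driving everything is the naturality of the section map under restriction to subgroups: for $H\leq G$ the preimage $\psi_{X,G}^{-1}(H)$ is canonically $\pi_1(X,H,x)$, the inclusion $X^G\hookrightarrow X^H$ is compatible with $s_{X,G}$ and $s_{X,H}$ (a single lift $\widetilde y$ of a fixed point computes $h_{\widetilde y}$ simultaneously for $H$ and for $G$), and restriction commutes with $c^{\Sigma}_{X,G}$ and with $f_n$. Thus for every $H$ one obtains a commuting square relating $s^{\infty,\Sigma}_{X,G}$ to $s^{\infty,\Sigma}_{X,H}$. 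The structural fact I would keep in mind throughout is that the stabiliser of any point of $X$ acts faithfully on its tangent line and is therefore cyclic; consequently $X^G\neq\emptyset$ forces $G$ to be cyclic, so for non-cyclic $G$ the target of the claimed bijection is empty and surjectivity becomes the non-existence of sections.

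Injectivity is the easy half. For nontrivial $G$ choose a subgroup $C\leq G$ of prime order. Since $X^G\subseteq X^C$ and the square above commutes, the map $s^{\infty,\Sigma}_{X,G}$ factors the map $s^{\infty,\Sigma}_{X,C}$ through the inclusion $X^G\hookrightarrow X^C$; as $s^{\infty,\Sigma}_{X,C}$ is injective by the base case, so is $s^{\infty,\Sigma}_{X,G}$.

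For surjectivity I would first dispose of non-cyclic $G$ by showing $S(\mathbf E^{\Sigma}(X,G)_{\infty})=\emptyset$. If $G$ has a non-cyclic proper subgroup $H$, then by the inductive hypothesis $s^{\infty,\Sigma}_{X,H}$ is a bijection onto $S(\mathbf E^{\Sigma}(X,H)_{\infty})$; but $X^H=\emptyset$, so the latter set is empty and no section of $\mathbf E^{\Sigma}(X,G)_{\infty}$ can restrict to $H$, a contradiction. This reduces us to the minimal non-cyclic $G$, those all of whose proper subgroups are cyclic. Such a $G$ cannot be simple, since every nonabelian simple group has a non-cyclic proper subgroup; hence $G$ has a nontrivial proper normal subgroup $N$, necessarily cyclic, the representative case being the Frobenius group $N=\mathbb Z/q$ sitting inside the nonabelian group of order $pq$ (whence the name). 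Given a section $s$, the inductive bijection for $N$ attaches a unique $y\in X^N$ to $[s|_N]$. I would then prove the \emph{equivariance lemma}: for $g\in G$ the homeomorphism $g$ of $X$ intertwines the section maps for $N$ via conjugation by $s(g)$ in $\widetilde\pi/[\pi]_{\infty}$, so that $s^{\infty,\Sigma}_{X,N}(g(y))$ and $s^{\infty,\Sigma}_{X,N}(y)$ coincide in $S(\mathbf E^{\Sigma}(X,N)_{\infty})$ (the twist of $N$ by $\mathrm{conj}_g$ cancelling against that conjugation). Injectivity for $N$ then forces $g(y)=y$ for all $g$, so $y\in X^G$, contradicting $X^G=\emptyset$. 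Hence no section exists.

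It remains to treat cyclic $G=\mathbb Z/n$. Here I would restrict $[s]$ to a maximal, hence normal, subgroup $M$ of prime index, use the inductive bijection to obtain the unique $y\in X^M$ realising $[s|_M]$, and run the same equivariance lemma to conclude $y\in X^G$, producing the candidate fixed point. The main obstacle is the final matching step: to conclude $s^{\infty,\Sigma}_{X,G}(y)=[s]$ one must show that two section classes of $\mathbf E^{\Sigma}(X,G)_{\infty}$ agreeing after restriction to $M$ already coincide, and the obstruction to this lives in the kernel of the restriction $H^1(G,\pi)\to H^1(M,\pi)$ in nonabelian cohomology, that is in $H^1(G/M,\pi^{M})$ with $\pi=\pi_1^{\Sigma}(X,x)/[\pi_1^{\Sigma}(X,x)]_{\infty}$, which need not vanish a priori. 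I expect to control this either by propagating the full (surjective) prime-order statement through the equivariance lemma, or, as the paper indicates, by invoking the explicit description of the abelianised section map together with Tamagawa's trick. Making the equivariance lemma precise in the twisted Frobenius case and closing this matching step are where I expect the real work to lie.
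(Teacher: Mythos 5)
Your overall strategy---induction on $|G|$, injectivity by restricting to a prime-order subgroup, and surjectivity by restricting a section $\phi$ to a proper normal subgroup $H$, locating the unique $y\in X^H$ by the inductive hypothesis, and using the identity $\phi(g)\phi(g^{-1}hg)\phi(g)^{-1}=\phi(h)$ to show $g(y)=y$---is exactly the paper's. Your reorganisation of the last case (non-cyclic $G$ all of whose proper subgroups are cyclic) is also essentially the paper's, except that you assert without proof that a nonabelian simple group has a non-cyclic proper subgroup. That assertion is precisely where the paper does its one piece of genuine group theory: for a maximal subgroup $H$ and $x\in G-H$, the intersection $J=H\cap xHx^{-1}$ is cyclic, hence is shown to be normalised by $x$ and by the abelian group $H$, hence is normal in $G=\langle x,H\rangle$, hence trivial; so $G$ is a Frobenius group and therefore not simple. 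You should either reproduce this argument or cite the Miller--Moreno theorem explicitly; as written this is a missing step, not a routine one.

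The more substantive issue is the ``matching step'' you flag and then leave open: having produced $y\in X^G$ with $s^{\infty,\Sigma}_{X,M}(y)=[s|_M]$, you still owe the identity $s^{\infty,\Sigma}_{X,G}(y)=[s]$, and you correctly observe that the obstruction to promoting agreement on $M$ to agreement on $G$ lives in a (nonabelian) $H^1(G/M,\cdot)$ with coefficients in the $M$-invariants of $\pi_1^{\Sigma}(X,x)/[\pi_1^{\Sigma}(X,x)]_{\infty}$, which has no a priori reason to vanish. Neither of the two routes you sketch is carried out, so the proposal does not actually establish surjectivity for cyclic $G$ of composite order. (The paper's own proof of Proposition 2.5 passes directly from ``$y$ is fixed by $G$'' to ``hence $s^{\infty,\Sigma}_{X,G}$ is a bijection'' without isolating this point, so your instinct that something remains to be said here is reasonable.) If you want to close it, the cleanest route is the one the paper uses for Theorem 5.2: strengthen the inductive claim to the statement that every section class produces a $G$-fixed point on every section-twisted finite cover $X_K$ (each such $X_K$ is again a positive-genus curve with a $G$-action to which the induction applies), and then invoke a nilpotent-tower version of Tamagawa's trick (Lemma 3.3) to convert ``fixed points on all covers'' into membership in the image of the section map; this bypasses the $H^1(G/M,\pi^M)$ obstruction entirely.
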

\begin{proof} Let $H$ be a subgroup of $G$. Then there is a commutative diagram:
$$
\CD1\rightarrow\pi^{\Sigma}_1(X,x)/[\pi^{\Sigma}_1(X,x)]_{\infty}@>>>
\pi^{\Sigma}_1(X,H,x)/[\pi^{\Sigma}_1(X,x)]_{\infty}@>>>H\rightarrow1\\
@VVV@VVV\!\!\!\!\!\!\!\!\!\!\!\!
@VVV\\
1\rightarrow\pi^{\Sigma}_1(X,x)/[\pi^{\Sigma}_1(X,x)]_{\infty}@>>>
\pi^{\Sigma}_1(X,G,x)/[\pi^{\Sigma}_1(X,x)]_{\infty}@>>>G\rightarrow1
\endCD
$$
such that the square on the right hand side is Cartesian because $H$ belongs to the class $\mathcal C_{\Sigma}$. The restriction of  sections
$$G\longrightarrow\pi^{\Sigma}_1(X,G,x)/[\pi^{\Sigma}_1(X,x)]_{\infty}$$
onto $H$ are sections
$$H\longrightarrow\pi^{\Sigma}_1(X,H,x)/[\pi^{\Sigma}_1(X,x)]_{\infty},$$
and hence this operation furnishes a map $r_{X,G,H}:S(\mathbf E^{\Sigma}(X,G)_{\infty})\rightarrow S(\mathbf
E^{\Sigma}(X,H)_{\infty})$ which makes diagram:
$$\CD X^G@>>s^{\infty,\Sigma}_{X,G}>
S(\mathbf E^{\Sigma}(X,G)_{\infty})\\
@VVV@VV{r_{X,G,H}}V\\
X^H@>s^{\infty,\Sigma}_{X,H}>>
S(\mathbf E^{\Sigma}(X,H)_{\infty})
\endCD$$
commutative where the left vertical map is the inclusion. Suppose now that  $H$ has prime order. Then the map $s^{\infty,\Sigma}_{X,H}$ is injective by assumption. Since the inclusion $X^G\subseteq X^H$ is also injective, the map $s^{\infty,\Sigma}_{X,G}$ is injective.

We are going to prove that $s^{\infty,\Sigma}_{X,G}$ is bijective by induction on the order of $G$, that is, we will assume that we proved the claim for every finite group $H$ whose order is strictly less than the order of $G$. By the above we only have to prove surjectivity. Suppose first that $G$ has a proper normal subgroup $H$. Let $$\phi:G\longrightarrow\pi^{\Sigma}_1(X,G,x)/[\pi^{\Sigma}_1(X,x)]_{\infty}$$
be a section of the lower short exact sequence of the first diagram in this proof. The restriction $\phi|_H$ of $\phi$ to $H$ is a section of the upper short exact sequence of the same diagram, and hence there is a unique fixed point $y\in X^H$ corresponding to the conjugacy class of $\phi|_H$ by assumption. Because $H$ is a normal subgroup for every $g\in G$ the point $g(y)$ is also fixed by $H$. Moreover $s^{\infty,\Sigma}_{X,H}(g(y))$ is the conjugacy class of the sections given by the rule
$$\psi(h)=\widetilde g\phi(g^{-1}hg)\widetilde g^{-1}\quad(\forall h\in H)$$
where $\widetilde g\in \pi^{\Sigma}_1(X,G,x)/[\pi^{\Sigma}_1(X,x)]_{\infty}$ is any lift of $g$. By choosing $\widetilde g=\phi(g)$ we get that $$\psi(h)=\phi(g)\phi(g^{-1}hg)\phi(g)^{-1}=
\phi(g)\phi(g)^{-1}\phi(h)\phi(g)\phi(g)^{-1}=\phi(h)$$
for every $g$ and $h$, so all these sections are conjugate. So by the injectivity of $s^{\infty,\Sigma}_{X,H}$ the point $y$ is fixed by $G$ and hence $s^{\infty,\Sigma}_{X,G}$ is a bijection.

Assume now that $G$ is a simple group. Then either $G$ is isomorphic to a group of prime order or it is non-abelian. When $G$ has prime order we know the claim by assumption. Hence we may assume that $G$ is not commutative. If the set $S(\mathbf E^{\Sigma}(X,G)_{\infty})$ is empty there is nothing to prove. Assume now that it is non-empty; then the same also holds for every proper subgroup of $G$, hence the latter have a fixed point by the induction hypothesis. In particular every proper subgroup of $G$ is cyclic, since the stabiliser of a point under an algebraic action of a finite group on a curve is necessarily cyclic. At this point it is possible to derive a contradiction by purely group-theoretical means: let $H$ be a maximal proper subgroup of $G$. Fix an element $x\in G-H$ and let $J$ be the subgroup $H\cap xHx^{-1}$. Since $J\subseteq H$, it is cyclic, so it is generated by an element $y\in H$. By definition $y=xzx^{-1}$ for some $z\in H$. The order of $z$ is the same as the order of $xzx^{-1}=y$, and hence $z=y^i$ for some integer $i$. Therefore $x^{-1}yx=z=y^i\in J$ and hence $x^{-1}Jx\subseteq J$. We get that $J$ is normalized by $x$ and it is also normalized by $H$ because the latter is commutative. Hence $J$ is a normal subgroup in $G$ since $x$ and $H$ generate $G$. Therefore it is the trivial subgroup. So $G$ is a Frobenius group. But Frobenius groups are never simple because of the existence of the Frobenius complement.
\end{proof}

\section{The section conjecture for abelian varieties}

\begin{notn} Let $\mathbf X=(X,x)$ be a connected locally contractible pointed topological space and let $\Sigma$ be a non-empty set of prime numbers. Let $I(\mathbf X,\mathcal C_{\Sigma})$ be the set of all open normal subgroups $K$ of $\pi_1^{\Sigma}(X,x)$. For every $K\in I(\mathbf X,\mathcal C_{\Sigma})$ let $\pi_K:(X_K,x_K)\rightarrow(X,x)$ be the covering map of pointed topological spaces corresponding to pre-image of finite index subgroup $K$ with respect to the completion map $\pi_1(X,x)\rightarrow \pi_1^{\Sigma}(X,x)$. Assume that a non-trivial continuous left action $h:G\times X\rightarrow X$ of $G$ on $X$ is given such that $X^G$ is locally path-connected and $G\in\mathcal C_{\Sigma}$. Let $I(\mathbf X,\mathcal C_{\Sigma},G)$ denote the set of open subgroups $K\in I(\mathbf X,\mathcal C_{\Sigma})$ which are also normal as a subgroup of the larger group $\pi^{\Sigma}_1(X,G,x)$. 
\end{notn}
\begin{defn} Let $j:\pi_1(X,G,x)\rightarrow \pi_1^{\Sigma}(X,G,x)$ denote the completion map. Let $s:G\rightarrow\pi^{\Sigma}_1(X,G,x)$ be a section of (1.0.4), and for every $K\in I(\mathbf X,\mathcal C_{\Sigma},G)$ and for every $g\in G$ pick an element $\widetilde g_K\in\pi_1(X,G,x)$ such that $j(\widetilde g_K)$ and $s(g)$ are in the same right $K$-coset. By our assumptions on $K$ the element $\widetilde g_K$ normalizes the subgroup $K\cap\pi_1(X,x)$ hence there is a unique homeomorphism $h^K_s(g):X_K\rightarrow X_K$ such that $\pi^K\circ\widetilde g_K=h^K_s(g)\circ\pi^K$. Moreover $h^K_s(g)$ is independent of the choice of $\widetilde g_K$ and the map $g\mapsto h^K_s(g)$ defines a continuous left-action of $G$ on $X_K$ which will be denoted simply by $h^K_s$. With respect to this action the map $\pi_K$ is $G$-equivariant.
\end{defn}
Let
$$s^{\Sigma}_{X,G}:\pi_0(X^G)
\longrightarrow S(\mathbf E^{\Sigma}(X,G))$$
denote the composition $c^{\Sigma}_{X,G}\circ s_{X,G}$. The next claim is very well-known, and it is usually called Tamagawa's trick. (See part $(iv)$ of Proposition 2.8 of \cite{Ta} on page 152.)
\begin{lemma} Assume that $X$ is compact. Then the conjugacy class of a section $s$ of (1.0.4) lies in the image of the map $s^{\Sigma}_{X,G}$ if and only if $X^G_K$ is non-empty with respect to the action $h^K_s$ for every $K\in I(\mathbf X,\mathcal C_{\Sigma},G)$.\qed
\end{lemma}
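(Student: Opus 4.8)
The plan is to prove the two implications separately. The forward (necessity) direction is formal; the converse is the genuine content and rests on a profinite compactness argument, which is exactly why the hypothesis that $X$ is compact appears.

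For necessity, suppose the conjugacy class of $s$ equals $s^{\Sigma}_{X,G}(y)$ for some $y\in X^G$. Choose a lift $\widetilde y\in\widetilde X$ of $y$, so that $s$ is conjugate to $j\circ h_{\widetilde y}$, where $h_{\widetilde y}$ is the geometric section of Definition 2.1. Fix $K\in I(\mathbf X,\mathcal C_{\Sigma},G)$ and, for each $g\in G$, take $\widetilde g_K=h_{\widetilde y}(g)$ as the representative in Definition 3.2; this is legitimate because $j(h_{\widetilde y}(g))$ and $s(g)$ lie in the same right $K$-coset. Since $h_{\widetilde y}(g)$ fixes $\widetilde y$ by construction, the image $y_K=\pi^K(\widetilde y)\in X_K$ satisfies $h^K_s(g)(y_K)=\pi^K(\widetilde g_K(\widetilde y))=\pi^K(\widetilde y)=y_K$ for every $g$, so $y_K\in X^G_K$ and this fixed locus is non-empty. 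Replacing $s$ by a conjugate only translates $\widetilde y$ by a deck transformation, which does not affect the conclusion.

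For the converse I would first record that $I(\mathbf X,\mathcal C_{\Sigma},G)$ is cofiltered under reverse inclusion: if $K_1,K_2$ belong to it then so does $K_1\cap K_2$, since an intersection of two open subgroups of $\pi^{\Sigma}_1(X,x)$ that are normal in $\pi^{\Sigma}_1(X,G,x)$ retains both properties and is contained in each. For $K'\subseteq K$ the covering map $p_{K',K}:X_{K'}\to X_K$ is $G$-equivariant for the actions $h^{K'}_s$ and $h^K_s$, because a representative $\widetilde g_{K'}$ of $h^{K'}_s(g)$ also represents $h^K_s(g)$: agreement modulo $K'$ forces agreement modulo $K$. Hence $p_{K',K}$ restricts to $X^G_{K'}\to X^G_K$, and the fixed loci $\{X_K^G\}$ form an inverse system. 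Since $X$ is compact and each $X_K\to X$ is a finite cover, every $X_K$ is compact; being the fixed locus of a finite group acting on a Hausdorff space, $X^G_K$ is closed, hence a non-empty compact space. The crux is the standard fact that a cofiltered inverse limit of non-empty compact Hausdorff spaces is non-empty; applied here it yields a compatible family $(y_K)$ with $y_K\in X^G_K$. As the maps $\pi_K$ are $G$-equivariant and compatible, all the $y_K$ lie over a single $y\in X$, and $h(g)(y)=\pi_K(h^K_s(g)(y_K))=\pi_K(y_K)=y$, so $y\in X^G$.

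It remains to identify the section attached to $y$ with $s$. Passing to the limit, $(y_K)$ determines a point $\widehat y$ of the pro-$\mathcal C_{\Sigma}$ cover $\widehat X=\varprojlim_K X_K$ lying over $y$; by construction $h^K_s(g)$ is induced by the action of $s(g)$ on $X_K$, so $s(g)$ fixes $\widehat y$ for every $g\in G$. Because $\pi^{\Sigma}_1(X,x)$ acts simply transitively on each fibre of $\widehat X\to X$, the stabiliser of $\widehat y$ in $\pi^{\Sigma}_1(X,G,x)$ meets $\pi^{\Sigma}_1(X,x)$ trivially and therefore maps isomorphically onto $G$; it is thus the image of a unique section, which contains $s(G)$ and hence equals $s$. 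This stabiliser section is exactly the completion of the geometric section $h_{\widetilde y}$ for the lift $\widetilde y$ of $y$ singled out by $\widehat y$, so $s$ represents $s^{\Sigma}_{X,G}(y)$, and a different choice of $\widehat y$ in its fibre only conjugates the section. The main obstacle is this converse direction, and the one genuinely non-formal input is the non-emptiness of the inverse limit, where compactness of $X$ (forcing each $X^G_K$ to be compact) is indispensable; organising the $X^G_K$ into an inverse system and recognising the limiting stabiliser as $s$ is then bookkeeping with the definitions of Section 3.
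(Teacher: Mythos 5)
Your argument is correct and is precisely the standard proof of Tamagawa's trick: the forward direction by pushing a fixed lift down to each $X_K$, and the converse by observing that the $X^G_K$ form a cofiltered inverse system of non-empty compact Hausdorff spaces, extracting a point of the limit, and identifying the stabiliser of that point with $s(G)$. The paper itself offers no proof, only the citation to part $(iv)$ of Proposition 2.8 of Tamagawa's paper, and your write-up supplies exactly the argument behind that reference, including the two points that need care (conjugating $s$ only translates the fixed loci, and a different lift $\widehat y$ in the fibre only conjugates the resulting section).
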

\begin{defn} Let $\Gamma$ be a group, let $\Sigma$ be a set of primes and let $p$ be an element of $\Sigma$. We will say that a group $\Gamma$ is $(\Sigma,p)$-good if the homomorphism of cohomology groups $H^n(\Gamma^{\Sigma}, M)\rightarrow H^n(\Gamma, M)$ induced by the natural homomorphism $\Gamma\rightarrow\Gamma^{\Sigma}$ is an isomorphism for every finite $p$-torsion $\Gamma^{\Sigma}$-module $M$. By Example 5.4 of \cite{Pa} on page 363 every finitely generated free abelian group is $(\Sigma,p)$-good for every $\Sigma$ and $p$ as above.
\end{defn} 
\begin{lemma} Let $\Gamma$ be a group. Assume that it has a normal subgroup $\Delta$ such that the quotient group $\Gamma/\Delta$ is in $\mathcal C_{\Sigma}$ and $\Delta$ is $(\Sigma,p)$-good and finitely generated. Then $\Gamma$ is also $(\Sigma,p)$-good.
\end{lemma}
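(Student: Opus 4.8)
The plan is to deduce goodness of $\Gamma$ from that of $\Delta$ by comparing the Lyndon--Hochschild--Serre spectral sequences attached to the extension $1\to\Delta\to\Gamma\to Q\to1$, where $Q=\Gamma/\Delta\in\mathcal C_\Sigma$, and to its $\mathcal C_\Sigma$-completion. The essential preliminary step, and the one I expect to be the main obstacle, is to check that the completion functor keeps this sequence exact, i.e. that
$$\CD 1@>>>\Delta^\Sigma@>>>\Gamma^\Sigma@>>>Q@>>>1\endCD$$
is again short exact (note that $Q^\Sigma=Q$ since $Q\in\mathcal C_\Sigma$). Right exactness of completion identifies $Q$ with $\Gamma^\Sigma/\overline\Delta$, where $\overline\Delta$ is the closure of the image of $\Delta$ in $\Gamma^\Sigma$, and produces a continuous surjection from $\Delta^\Sigma$ onto $\overline\Delta$; what has to be proved is its injectivity. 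This is exactly where I would use that $\Delta$ is finitely generated. The subgroups $\Delta\cap U$, for $U$ an open normal subgroup of $\Gamma^\Sigma$, must be shown to be cofinal among all normal finite-$\mathcal C_\Sigma$-index subgroups of $\Delta$; given such a subgroup $V$, finite generation of $\Delta$ guarantees only finitely many subgroups of its index, so the intersection $W$ of all $\Gamma$-conjugates of $V$ is a $\Gamma$-invariant subgroup of $\Delta$ of finite index with $\Delta/W\in\mathcal C_\Sigma$. Since $\mathcal C_\Sigma$ is a complete class it is closed under extensions, so $\Gamma/W$ lies in $\mathcal C_\Sigma$, and $W$ then contains some $\Delta\cap U$; this gives the required cofinality and hence $\Delta^\Sigma\xrightarrow{\ \sim\ }\overline\Delta$.

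Once exactness is in hand, the commutative ladder relating the two extensions (the completion maps on $\Delta$ and $\Gamma$, and the identity on $Q$) induces a morphism from the continuous Lyndon--Hochschild--Serre spectral sequence
$$E_2^{i,j}=H^i(Q,H^j(\Delta^\Sigma,M))\Longrightarrow H^{i+j}(\Gamma^\Sigma,M)$$
to the discrete one
$$E_2^{i,j}=H^i(Q,H^j(\Delta,M))\Longrightarrow H^{i+j}(\Gamma,M),$$
valid for any finite $p$-torsion $\Gamma^\Sigma$-module $M$ (here $H^j(\Delta^\Sigma,M)$ and $H^j(\Delta,M)$ are regarded as $Q$-modules, $Q$ being finite). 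On the $E_2$-page this morphism is obtained by applying $H^i(Q,-)$ to the restriction map $H^j(\Delta^\Sigma,M)\to H^j(\Delta,M)$, which is $Q$-equivariant by functoriality of the construction. Restricting $M$ to $\Delta^\Sigma$ yields a finite $p$-torsion $\Delta^\Sigma$-module, so the hypothesis that $\Delta$ is $(\Sigma,p)$-good says precisely that this inner restriction map is an isomorphism of $Q$-modules for every $j$.

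It follows that the morphism of spectral sequences is an isomorphism on $E_2$, and the comparison theorem for (convergent) spectral sequences then forces it to be an isomorphism on the abutments, giving $H^n(\Gamma^\Sigma,M)\xrightarrow{\ \sim\ }H^n(\Gamma,M)$ for all $n$ and all finite $p$-torsion $\Gamma^\Sigma$-modules $M$; that is, $\Gamma$ is $(\Sigma,p)$-good. I expect the routine parts to be the formal $E_2$-comparison and the $Q$-equivariance of restriction, while the genuine content concentrates in the exactness of the completed extension, where the finite generation of $\Delta$ and the closure of $\mathcal C_\Sigma$ under extensions are both indispensable.
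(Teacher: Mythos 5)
Your proof is correct, and it splits into the same two halves as the paper's, agreeing on the first and diverging on the second. The exactness of $1\to\Delta^{\Sigma}\to\Gamma^{\Sigma}\to\Gamma/\Delta\to1$ is established in the paper by essentially your argument: given $N\triangleleft\Delta$ with $\Delta/N\in\mathcal C_{\Sigma}$, finite generation of $\Delta$ (finitely many subgroups of a given index) produces a smaller subgroup of $\mathcal C_{\Sigma}$-index that is stable under $\Gamma$ --- the paper takes the intersection of the kernels of all homomorphisms $\Delta\to\Delta/N$, which is characteristic and hence automatically normal in $\Gamma$, whereas you intersect the finitely many $\Gamma$-conjugates of $N$; both then invoke closure of $\mathcal C_{\Sigma}$ under extensions to see that this subgroup is induced from the pro-$\mathcal C_{\Sigma}$ topology of $\Gamma$. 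For the cohomological step you compare the Lyndon--Hochschild--Serre spectral sequences of the discrete and completed extensions, using that the finite quotient $Q$ contributes identically to both $E_2$-pages and that goodness of $\Delta$ gives a $Q$-equivariant isomorphism $H^j(\Delta^{\Sigma},M)\to H^j(\Delta,M)$; the comparison theorem then transfers the isomorphism to the abutments. The paper instead argues in the style of part (b) of Exercise 2 of \cite{Se}: it identifies $\textrm{Ind}^{\Delta}_{\Gamma}(M)$ with $\textrm{Ind}^{\Delta^{\Sigma}}_{\Gamma^{\Sigma}}(M)$ (this is where the exactness of the completed sequence is used) and reduces the comparison for $\Gamma$ to that for $\Delta$ via Shapiro's lemma. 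Your route requires the machinery of the continuous Hochschild--Serre spectral sequence but is self-contained and makes every reduction explicit; the paper's Shapiro argument is shorter on the page but leaves more of the module-theoretic bookkeeping implicit. There is no gap in your proposal.
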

\begin{proof} This claim is just a minor variant of part $(b)$ of Exercise 2 of \cite{Se} on page 13. Because $\Delta$ is finitely generated, every normal subgroup $N\triangleleft\Delta$ such that $\Delta/N\in\mathcal C_{\Sigma}$ contains a characteristic subgroup $R\triangleleft\Delta$ such that $\Delta/R\in\mathcal C_{\Sigma}$, namely, the intersection of the kernels of all homomorphisms $\Delta\rightarrow\Delta/N$. Therefore the map $\Delta^{\Sigma}\rightarrow
\Gamma^{\Sigma}$ induced by the inclusion map $\Delta\rightarrow\Gamma$ is injective and the quotient groups $\Gamma^{\Sigma}/\Delta^{\Sigma}$ and $\Gamma/\Delta$ are isomorphic. For every pro-group $R$, closed subgroup $S\leq R$ and $S$-module $M$ let $\textrm{Ind}^S_R(M)$ denote the $R$-module induced from $M$. By the above for every finite $p$-torsion $\Delta^{\Sigma}$-module $M$, the induced modules  $\textrm{Ind}^{\Delta}_{\Gamma}(M)$ and
$\textrm{Ind}^{\Delta^{\Sigma}}_{\Gamma^{\Sigma}}(M)$ are the same, and we have a commutative diagram:
$$\CD
H^n(\Gamma^{\Sigma},M)@>>>H^n(\Gamma,M)\\
@VVV@VVV\\
H^n(\Delta^{\Sigma},\textrm{Ind}^{\Delta^{\Sigma}}_{\Gamma^{\Sigma}}(M))
@>>>
H^n(\Delta,\textrm{Ind}^{\Delta}_{\Gamma}(M))
\endCD
$$
where the vertical maps are those defined on page 11 of \cite{Se}. By Shapiro's lemma the vertical maps are isomorphisms. By assumption the lower horizontal map is also an isomorphism, so the claim is now clear.
\end{proof}
In the rest of the paper we will assume that $G$ is a cyclic group of prime order $p$. 
\begin{thm} Let $A$ be an abelian variety over $\mathbb C$ equipped with an algebraic action of $G$. Then the map
$$s^{\Sigma}_{A,G}:\pi_0(A^G)\longrightarrow S(\mathbf E^{\Sigma}(A,G))$$
is a bijection.
\end{thm}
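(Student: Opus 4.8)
The plan is to write $s^{\Sigma}_{A,G}=c^{\Sigma}_{A,G}\circ s_{A,G}$ and to analyse the two factors separately, exploiting the flat (affine) structure of the universal cover. First I would pass to the universal cover of $A$. Since $A$ is a complex torus, its universal cover is a complex vector space $V$ with $\pi_1(A,x)=\Lambda$ a full lattice and $A=V/\Lambda$. Every algebraic automorphism of an abelian variety is the composite of a translation and a group automorphism, so each $h(g)$ lifts to an affine transformation of $V$; hence $\pi_1(A,G,x)$ is realised as a group of affine transformations of $V$, and every section $s\colon G\to\pi_1(A,G,x)$ makes $V$ into an affine $G$-space. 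The barycentre $\tilde y=\frac1{|G|}\sum_{g\in G}s(g)v_0$ of any orbit is then fixed by this action, because affine maps preserve barycentric combinations. By the uniqueness built into Definition 2.1 the section fixing $\tilde y$ is precisely $h_{\tilde y}$, so $s=h_{\tilde y}$ and $y=\pi(\tilde y)\in A^G$ satisfies $s_{A,G}(y)=[s]$; thus $s_{A,G}$ is surjective onto $S(\mathbf E(A,G))$.

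For injectivity of $s_{A,G}$, suppose $h_{\tilde y'}=\gamma h_{\tilde y}\gamma^{-1}$ with $\gamma\in\Lambda$. Then the affine $G$-action $h_{\tilde y'}$ on $V$ fixes both $\tilde y'$ and $\gamma\tilde y$, so its fixed locus $F$ is an affine subspace of $V$, hence connected, and $\pi(F)$ is a connected subset of $A^G$ containing $y'=\pi(\tilde y')$ and $y=\pi(\gamma\tilde y)$. Therefore $y$ and $y'$ lie in a single connected component of $A^G$, which we have agreed to identify with one element of $\pi_0(A^G)$. Together with Lemma 2.2 this shows that $s_{A,G}\colon\pi_0(A^G)\to S(\mathbf E(A,G))$ is a bijection.

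It remains to handle the completion. For surjectivity of $s^{\Sigma}_{A,G}$ I would invoke Tamagawa's trick (Lemma 3.3): a completed section $s$ lies in the image as soon as $A^G_K$ is nonempty for every $K$, and each cover $A_K$ is again an abelian variety (a quotient of $V$ by a finite-index sublattice) on which the twisted action $h^K_s$ is affine, so the same barycentre argument produces the required fixed point; hence $s^{\Sigma}_{A,G}$ is surjective. For injectivity, since $s_{A,G}$ is injective it suffices to prove that $c^{\Sigma}_{A,G}$ is injective. Assuming $A^G\neq\emptyset$ (otherwise there is nothing to check), the sets $S(\mathbf E(A,G))$ and $S(\mathbf E^{\Sigma}(A,G))$ are torsors under $H^1(G,\Lambda)$ and $H^1(G,\Lambda^{\Sigma})$ respectively, because $\Lambda$ is abelian, and $c^{\Sigma}_{A,G}$ is equivariant for the completion map $H^1(G,\Lambda)\to H^1(G,\Lambda^{\Sigma})$. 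I would then show this map is injective: both groups are annihilated by $|G|=p$, and since $\Lambda$ is a finitely generated free abelian group it is $(\Sigma,p)$-good by Definition 3.4 with $p\in\Sigma$, so the pro-$\mathcal C_{\Sigma}$ completion leaves these finite $p$-groups unchanged. Combining the injectivity of $c^{\Sigma}_{A,G}$ with that of $s_{A,G}$ gives injectivity of $s^{\Sigma}_{A,G}$, and with the surjectivity above the theorem follows.

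The main obstacle is the comparison between discrete and profinite conjugacy classes, that is, the injectivity of $c^{\Sigma}_{A,G}$: one must guarantee that passing to the pro-$\mathcal C_{\Sigma}$ completion does not fuse two topologically inequivalent sections. This is exactly the injectivity (indeed bijectivity) of $H^1(G,\Lambda)\to H^1(G,\Lambda^{\Sigma})$, and it is where the $(\Sigma,p)$-goodness of $\Lambda$, and via Lemma 3.5 of $\pi_1(A,G,x)$, is essential. By contrast, the geometric ingredients — the affine realisation on $V$, the barycentre construction, the identification of the section sets with $H^1$-torsors, and the verification that each $A_K$ is an abelian variety carrying an affine twisted action — are routine.
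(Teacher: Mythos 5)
Your route is genuinely different from the paper's: the paper proves surjectivity by showing that a free $G$-action on $A$ would make $\pi_1^{\Sigma}(A/G)$ a group of finite mod-$p$ cohomological dimension containing the finite group $G$, and proves injectivity via the Kummer sequence together with the divisibility of a fixed point lying in $nA^G$ for all $\Sigma$-numbers $n$; you instead prove the discrete statement by affine geometry on the universal cover and then compare discrete and profinite sections cohomologically. The discrete half of your argument is correct and attractive: $\pi_1(A,G,x)$ does act on $V$ by affine transformations, the barycentre of an orbit of an affine $G$-action on a vector space is fixed, and the fixed locus of such an action is an affine subspace, so $s_{A,G}\colon\pi_0(A^G)\to S(\mathbf E(A,G))$ is a bijection. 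Your injectivity argument for the completion, via the $H^1(G,\Lambda)$-torsor structure, is also essentially right, though quoting Definition 3.4 is a misattribution: $(\Sigma,p)$-goodness of $\Lambda$ concerns the cohomology \emph{of} the group $\Lambda$ with finite $p$-torsion coefficients, not the cohomology of $G$ with coefficients $\Lambda$ versus $\Lambda^{\Sigma}$. What you actually need is that $H^i(G,\Lambda)\to H^i(G,\Lambda^{\Sigma})$ is an isomorphism for $i=1$ (and, see below, $i=2$), which follows from the direct computation that $H^i(G,\Lambda\otimes\mathbb Z_l)$ vanishes for $l\neq p$ and equals $H^i(G,\Lambda)$ for $l=p$, these groups being finite and $p$-torsion.

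The genuine gap is in your surjectivity step for $s^{\Sigma}_{A,G}$. You claim that because the twisted action $h^K_s$ on $A_K$ is affine, ``the same barycentre argument produces the required fixed point.'' This is false as stated: an affine action of a finite group on a torus need not have a fixed point (for instance $\mathbb Z/2$ acting on $\mathbb R/\mathbb Z$ by $t\mapsto t+\tfrac12$), because barycentres are not defined on a torus. The barycentre argument runs on the vector space $V$ and requires a genuine homomorphic lift of the $G$-action to the affine group of $V$, i.e.\ a section of the orbifold extension of $(A_K,h^K_s)$ --- which is exactly the kind of datum whose existence is at stake. The profinite section $s$ only provides lifts $\widetilde g_K$ that multiply correctly modulo $\Lambda_K$, and the barycentre $\frac1{|G|}\sum_{g\in G}\widetilde g_K(v_0)$ is then fixed only modulo $\frac1{|G|}\Lambda$, not modulo $\Lambda_K$. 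The fix is available with tools you already deploy for injectivity: drop Tamagawa's trick and show instead that $c^{\Sigma}_{A,G}$ itself is a bijection, using that the obstruction classes of the two extensions correspond under the isomorphism $H^2(G,\Lambda)\to H^2(G,\Lambda^{\Sigma})$ (so $S(\mathbf E(A,G))$ is nonempty whenever $S(\mathbf E^{\Sigma}(A,G))$ is), and that when nonempty both section sets are torsors under the isomorphic groups $H^1(G,\Lambda)$ and $H^1(G,\Lambda^{\Sigma})$. Combined with your bijectivity of $s_{A,G}$ this yields the theorem.
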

\begin{proof} This claim is just a special case of Theorem 1.3 of \cite{Pa} on page 354. However we are going to give a geometric proof in the spirit of this article. First we are going to prove that the map $s^{\Sigma}_{A,G}$ is surjective. Since every finite \'etale cover of an abelian variety is also an abelian variety, it will be enough to show that $A^G$ is non-empty when $S(\mathbf E^{\Sigma}(A,G))$ is non-empty, by Tamagawa's trick. Assume that this is not the case; then the action of $G$ on $A$ is free.

Let $A/G$ denote the quotient of $A$ by this free action. This is a smooth, projective variety, so it  has finite cohomological dimension. The quotient map
$A\rightarrow A/G$ is a finite topological cover and hence $A/G$ is an Eilenberg-MacLane space. Its fundamental group is the extension of $G$ by the fundamental group of $A$. Since the latter is a finitely generated abelian group, from Lemma 3.5 we get that
$$H^n(A/G,\mathbb F_p)\cong H^n(\pi_1(A/G,y),\mathbb F_p)$$
is isomorphic to $H^n(\pi^{\Sigma}_1(A/G,y),\mathbb F_p)$ for every $n\in\mathbb N$, where $y$ is any point of $A/G$. So $\pi^{\Sigma}_1(A/G,y)$ has finite mod-$p$ cohomological dimension. But by assumption $\pi^{\Sigma}_1(A/G,y)$ has a closed subgroup isomorphic to the finite group $G$. As the cohomological dimension of $G$ is infinite, this is a contradiction.

Next we prove that the map $s^{\Sigma}_{A,G}$ is injective. We may assume without the loss of generality that $A^G$ is non-empty, so we may also suppose that the identity element of $A$ is fixed by $G$. In this case the action of $G$ respects the group structure of $A$, and hence $A^G$ is a smooth projective algebraic group over $\mathbb C$. Since the connected component $B$ of the identity in $A^G$ is connected by definition, it is an abelian variety. Clearly $\pi_0(A^G)=A^G/B$ so the the set $\pi_0(A^G)$ has the structure of a finite abelian group. For every $n\in\mathbb N$ and abelian variety $C$ let $C[n]$ denote the $n$-torsion of $C$. Let $n\in\mathbb N$ be divisible by the order of $\pi_0(A^G)$. Let 
$$\CD0@>>>A[n]@>>>A@>{x\mapsto x^n}>>A@>>>0\endCD$$
be the Kummer short exact sequence and let 
$$\CD A^G/nA^G@>{\delta_n}>> H^1(G,A[n])\endCD$$
be the associated coboundary map. The multiplication by $n$ map is surjective on the abelian variety $B$, so we get that $A^G/nA^G=\pi_0(A^G)$ and hence $\delta_n$ induces an injection $\pi_0(A^G)\rightarrow H^1(G,A[n])$. The group $H^1(G,A[n])$ is $p$-torsion so we get that $\pi_0(A^G)$ is $p$-torsion, too.

Let $x,y\in A^G$ be two fixed points lying on two different components $X$ and $Y$ of $A^G$, respectively. We want to show that $s^{\Sigma}_{A,G}(X)$ and $s^{\Sigma}_{A,G}(Y)$ are different. We may assume without the loss of generality that $x$ is the identity element of $A$. Assume that $s=s^{\Sigma}_{A,G}(X)$ and $s^{\Sigma}_{A,G}(Y)$ are the same. For every $K\in I((A,x),\mathcal C_{\Sigma},G)$ let $\pi_K:(A_K,x_K)\rightarrow (A,x)$ be the covering map introduced in Definition 3.2. We may equip $A_K$ uniquely with the structure of an abelian variety such that $x_K$ is the identity of $A_K$ and $\pi_K$ is an isogeny of abelian varieties. Note that with respect to the action $h^K_s$ on $A_K$ the isogeny $\pi_K$ is $G$-equivariant and by assumption there is a $y_K\in A_K$ fixed by $G$ which maps to $y$ with respect to $\pi_K$ for every such $K$. 

Let $\iota:C_1\rightarrow C_2$ be an isogeny of abelian varieties over $\mathbb C$. Then every automorphism of the group scheme $C_2$ has at most one lift to an automorphism of $C_1$ with respect to $\iota$, since the map $\iota$ induces an isomorphism on the tangent spaces of the identities of the abelian varieties $C_1$ and $C_2$. In particular the the action of $G$ on $A_{n\pi_1^{\Sigma}(A,x)}=A$ (where $x_K=x$) must be the given action on $A$ for every $n\in\mathbb N$ whose prime divisors are all in $\Sigma$. In particular for every such $n$ the point $y$ lies in $nA^G$. By the above this implies that $y$ lies on the same component as $x$ which is a contradiction.
\end{proof}
Since the fundamental group of abelian varieties over $\mathbb C$ are commutative, the theorem above has the following immediate
\begin{cor} Let $A$ be an abelian variety over $\mathbb C$ equipped with an action of $G$. Then the map
$$s^{1,\Sigma}_{A,G}:\pi_0(A^G)\longrightarrow S(\mathbf E^{\Sigma}(A,G)_1)$$
is a bijection.\qed
\end{cor}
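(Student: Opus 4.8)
The plan is to observe that the corollary is essentially a restatement of Theorem 3.6, once one notes that the abelianisation step $f_1$ does nothing in this setting. First I would recall that the topological fundamental group $\pi_1(A,x)$ of a complex abelian variety of dimension $g$ is the free abelian group $\mathbb Z^{2g}$; in particular it is commutative. Since the pro-$\mathcal C_{\Sigma}$ completion functor preserves commutativity, the pro-group $\pi:=\pi^{\Sigma}_1(A,x)$ is abelian as well.

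Next I would unwind the definition of the lower central series for such a $\pi$. Because $\pi$ is abelian, the first term $[\pi]_1=\overline{[[\pi]_0,\pi]}=\overline{[\pi,\pi]}$ is the trivial subgroup, whence $\pi/[\pi]_1=\pi$. Therefore the short exact sequence $\mathbf E^{\Sigma}(A,G)_1$, obtained by quotienting the normal subgroup $\pi$ by $[\pi]_1$, is literally identical to $\mathbf E^{\Sigma}(A,G)$, not merely isomorphic to it. Consequently $S(\mathbf E^{\Sigma}(A,G)_1)=S(\mathbf E^{\Sigma}(A,G))$ as sets, and the comparison map $f_1(\mathbf E^{\Sigma}(A,G))$ is the identity.

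Finally I would combine these observations with the factorisation $s^{1,\Sigma}_{A,G}=f_1(\mathbf E^{\Sigma}(A,G))\circ s^{\Sigma}_{A,G}$ coming from the definition of the section maps. Since $f_1$ is the identity here, $s^{1,\Sigma}_{A,G}=s^{\Sigma}_{A,G}$, and the latter is a bijection by Theorem 3.6. There is no substantive obstacle: the only point requiring (minimal) care is verifying that the identification of the two exact sequences is a genuine equality, so that the two section maps coincide on the nose rather than merely up to a compatible isomorphism of the target sets. This is immediate from the vanishing of $[\pi]_1$.
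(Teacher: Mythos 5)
Your proposal is correct and is exactly the paper's argument: the paper derives the corollary as an "immediate" consequence of Theorem 3.6 precisely because the fundamental group of an abelian variety is commutative, so $[\pi^{\Sigma}_1(A,x)]_1$ vanishes and $s^{1,\Sigma}_{A,G}$ coincides with $s^{\Sigma}_{A,G}$. No further comment is needed.
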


\section{The abelian section map for groups of prime order}

\begin{defn} Assume now that $X$ is a smooth, projective, geometrically irreducible curve of positive genus over $\mathbb C$ and $G$ is a group of prime order $p$ acting on $X$ non-trivially. For every smooth projective connected curve $Y$ over $\mathbb C$ let $\textrm{Pic}(Y)$ and $\textrm{Pic}^n(Y)$ denote the Picard scheme of $Y$ and the degree $n$ component of the Picard scheme of $Y$, respectively. By functoriality the variety $\textrm{Pic}^n(X)$ is equipped with an action of $G$. Let $X/G$ denote the quotient of $X$ with respect to the action of $G$. Then $X/G$ is also a smooth projective connected curve over $\mathbb C$. Let $\pi:X\rightarrow X/G$ be the quotient map. 
\end{defn}
\begin{prop} The connected component of the identity in $\textrm{\rm Pic}^0(X)^G$ is the image of $\textrm{\rm Pic}^0(X/G)$ under the map $\pi^*:\textrm{\rm Pic}^0(X/G)\rightarrow\textrm{\rm Pic}^0(X)$ induced by Picard functoriality.
\end{prop}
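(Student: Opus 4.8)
The plan is to prove the two sides agree by exhibiting them as connected algebraic subgroups of $\textrm{Pic}^0(X)$, one contained in the other, and then matching their dimensions; equality follows. First I would dispose of the containment. Since $\pi\circ g=\pi$ for every $g\in G$, functoriality gives $g^*\circ\pi^*=\pi^*$, so every element of the image of $\pi^*$ is fixed by $G$; as $\pi^*$ is a homomorphism of abelian varieties, its image is a connected subgroup of $\textrm{Pic}^0(X)^G$ through the identity, hence lies in the identity component $B$. Thus $\pi^*\textrm{Pic}^0(X/G)\subseteq B$, and it remains only to show $\dim\pi^*\textrm{Pic}^0(X/G)=\dim B$.

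The dimension of the image is easy. The composite $\pi_*\circ\pi^*$ equals multiplication by $\deg\pi=p$ on $\textrm{Pic}^0(X/G)$, which is an isogeny; hence $\pi^*$ has finite kernel and $\dim\pi^*\textrm{Pic}^0(X/G)=\dim\textrm{Pic}^0(X/G)=g(X/G)$, the genus of $X/G$.

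For $\dim B$ I would pass to tangent spaces at the identity. Because $G$ is finite and we work over $\mathbb C$, the fixed scheme $\textrm{Pic}^0(X)^G$ is smooth, and its tangent space at the identity is the $G$-invariant part of $T_0\textrm{Pic}^0(X)=H^1(X,\mathcal O_X)$. Hence $\dim B=\dim_{\mathbb C}H^1(X,\mathcal O_X)^G$. I would then identify this invariant space with $H^1(X/G,\mathcal O_{X/G})$: since $\pi$ is finite, $H^1(X,\mathcal O_X)=H^1(X/G,\pi_*\mathcal O_X)$; forming $G$-invariants is exact in characteristic zero, via the averaging idempotent $\tfrac1{|G|}\sum_{g\in G}g$, so it commutes with cohomology, and $(\pi_*\mathcal O_X)^G=\mathcal O_{X/G}$ yields $H^1(X,\mathcal O_X)^G\cong H^1(X/G,\mathcal O_{X/G})$, of dimension $g(X/G)$. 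Therefore $\dim B=g(X/G)=\dim\pi^*\textrm{Pic}^0(X/G)$, and a containment of connected subvarieties of equal dimension forces $B=\pi^*\textrm{Pic}^0(X/G)$.

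The main obstacle is precisely this dimension computation of $B$: one must justify that the fixed-point scheme is smooth with the expected tangent space and that forming $G$-invariants commutes with taking $H^1$. Equivalently, and perhaps more cleanly, I could compare tangent spaces directly. The differential of $\pi^*$ at the identity is the pullback $H^1(X/G,\mathcal O_{X/G})\to H^1(X,\mathcal O_X)$, which is injective (as $\ker\pi^*$ is finite) with image exactly $H^1(X,\mathcal O_X)^G=T_0B$; so $\pi^*\textrm{Pic}^0(X/G)$ and $B$ share their tangent space at $0$, and since both are smooth and connected with $\pi^*\textrm{Pic}^0(X/G)\subseteq B$, they coincide. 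Everything else, namely the containment and the finiteness of $\ker\pi^*$, is routine.
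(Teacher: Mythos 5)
Your proposal is correct and follows essentially the same route as the paper: both reduce the statement to a comparison of tangent spaces at the identity and then identify $H^1(X,\mathcal O_X)^G$ with $H^1(X/G,\mathcal O_{X/G})$ using that $\pi$ is finite and that taking $G$-invariants is exact in characteristic zero (the paper phrases this via \v Cech complexes and Grothendieck descent, you via $(\pi_*\mathcal O_X)^G=\mathcal O_{X/G}$, which is the same computation). Your extra observation that $\pi_*\circ\pi^*=[p]$ gives finiteness of $\ker\pi^*$ is a harmless alternative to reading off injectivity of the differential from the descent isomorphism itself.
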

\begin{proof} The action of $G$ respects the group structure of $\textrm{\rm Pic}^0(X)$, and hence $\textrm{\rm Pic}^0(X)^G$ is a smooth projective algebraic group over $\mathbb C$. Therefore the connected component $A$ of the identity in $\textrm{\rm Pic}^0(X)^G$ is an abelian variety. The image $B$ of $\textrm{\rm Pic}^0(X/G)$ with respect to $\pi^*$ is also an abelian variety which is contained by $A$. 

For every abelian variety $C$ over $\mathbb C$ let $T(C)$ denote tangent space of $C$ at the identity. By the above it will be enough to show that $T(A)\subseteq T(\textrm{Pic}^0(X))$ and $T(B)\subseteq T(\textrm{Pic}^0(X))$ are equal. There is a natural, and hence $G$-equivariant isomorphism between $T(\textrm{Pic}^0(X))$ and $H^1(X,\mathcal O_X)$. Under this isomorphism $T(A)$ corresponds to $H^1(X,\mathcal O_X)^G$ with respect to the induced action of $G$ and $T(B)$ corresponds to $\pi^*(H^1(X/G,\mathcal O_{X/G}))$. 
Let $\mathcal U=\{U_i\}_{i\in I}$ be a finite, Zariski-open covering by affine subschemas of $X/G$ and equip $I$ with a well-ordering. Then $\mathcal V=\{\pi^{-1}(U_i)\}_{i\in I}$ is also a finite, Zariski-open covering by affine subschemas of $X$. Therefore the \v Cech complex $C^*(\mathcal V,\mathcal O_X)$ computes the cohomology of $\mathcal O_X$ by Theorem III.4.5 of \cite{Ha}. Because the map $\pi$ is flat and affine, the subcomplex $C^*(\mathcal V,\mathcal O_X)^G$ of $G$-invariants is isomorphic to the \v Cech complex $C^*(\mathcal U,\mathcal O_{X/G})$ by Grothendieck's decent. But the functor of $G$-invariants is exact on the category of $\mathbb C[G]$-modules, so the $j$-th cohomology of $C^*(\mathcal V,\mathcal O_X)^G$ is $H^j(X,\mathcal O_X)^G$, that is, we get that
$H^j(X,\mathcal O_X)^G\cong H^j(X/G,\mathcal O_{X/G})$ for every $j$. The claim is now clear.
\end{proof}
\begin{prop} Let $\mathcal L\in\textrm{\rm Pic}^n(X)^G$. Then there is a $G$-invariant divisor $D$ on $X$ of degree $n$ such that the linear equivalence class of $D$ is $\mathcal L$. 
\end{prop}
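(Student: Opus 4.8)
The plan is to recast the existence of a $G$-invariant divisor representing $\mathcal L$ as a cohomological surjectivity statement. Write $\mathrm{Div}(X)$ for the group of divisors on $X$ and $\mathrm{Prin}(X)$ for the subgroup of principal divisors; both inherit a $G$-action from the action on $X$. Taking $G$-invariants of the tautological short exact sequence of $G$-modules
$$\CD 0@>>>\mathrm{Prin}(X)@>>>\mathrm{Div}(X)@>>>\mathrm{Pic}(X)@>>>0\endCD$$
produces a long exact sequence whose initial segment reads
$$\CD 0@>>>\mathrm{Prin}(X)^G@>>>\mathrm{Div}(X)^G@>>>\mathrm{Pic}(X)^G@>{\partial}>>H^1(G,\mathrm{Prin}(X))\endCD$$
A class $\mathcal L\in\mathrm{Pic}^n(X)^G$ is represented by a $G$-invariant divisor exactly when it lies in the image of $\mathrm{Div}(X)^G$, i.e.\ when $\partial(\mathcal L)=0$; I will in fact establish the stronger vanishing $H^1(G,\mathrm{Prin}(X))=0$. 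Note that once such an invariant $D$ is found, $\deg D=\deg\mathcal L=n$ automatically, since linearly equivalent divisors have equal degree, so this yields the full statement.

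To compute the obstruction group I would use the second tautological short exact sequence of $G$-modules
$$\CD 0@>>>\mathbb C^{*}@>>>\mathbb C(X)^{*}@>{f\mapsto\mathrm{div}(f)}>>\mathrm{Prin}(X)@>>>0\endCD$$
in which the kernel is the group of nonzero constants because $X$ is projective and connected, and on which $G$ acts trivially on $\mathbb C^{*}$ since it acts by $\mathbb C$-automorphisms. Its long exact cohomology sequence contains
$$\CD H^1(G,\mathbb C(X)^{*})@>>>H^1(G,\mathrm{Prin}(X))@>>>H^2(G,\mathbb C^{*})\endCD$$
so it suffices to show that both outer terms vanish.

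Both vanishings are classical. Since $G$ has prime order and acts non-trivially, it acts faithfully, so $\mathbb C(X)$ is a finite Galois extension of its fixed field with group $G$; Hilbert's Theorem $90$ then gives $H^1(G,\mathbb C(X)^{*})=0$. For the right-hand term, $G\cong\mathbb Z/p\mathbb Z$ acts trivially on $\mathbb C^{*}$, so the periodicity of the cohomology of a cyclic group identifies $H^2(G,\mathbb C^{*})$ with $\mathbb C^{*}/(\mathbb C^{*})^{p}$, which is trivial because $\mathbb C^{*}$ is divisible. Hence $H^1(G,\mathrm{Prin}(X))=0$ and the argument concludes. The step demanding the most care is the identification of the fixed field $\mathbb C(X)^{G}$ with the function field $\mathbb C(X/G)$ of the quotient curve, which is precisely what licenses the appeal to Hilbert $90$; the remaining ingredients are formal.
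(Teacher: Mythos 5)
Your proof is correct, but it takes a genuinely different route from the paper. The paper argues geometrically: it reduces to the case where $\mathcal L$ is very ample by adding the class of the manifestly invariant divisor $m\sum_{g\in G}g(y)$, chooses an isomorphism $\iota:\mathcal L\rightarrow g_*(\mathcal L)$ for a generator $g$ of $G$, pushes forward to $X/G$, and extracts an eigenvector $s\in H^0(X/G,\pi_*(\mathcal L))$ of $\pi_*(\iota)$ whose divisor is then $G$-invariant; everything stays within coherent sheaf theory on the quotient. You instead identify the obstruction to lifting a $G$-fixed class to a $G$-invariant divisor as a class in $H^1(G,\mathrm{Prin}(X))$ and kill that whole group via the sequence $0\rightarrow\mathbb C^*\rightarrow\mathbb C(X)^*\rightarrow\mathrm{Prin}(X)\rightarrow0$, Hilbert 90, and the divisibility of $\mathbb C^*$. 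Both arguments use that $G$ is cyclic: the paper through the choice of a generator and an eigensection, you through the periodicity computation $H^2(G,\mathbb C^*)\cong\mathbb C^*/(\mathbb C^*)^p=0$ (for non-cyclic $G$ this group is the Schur multiplier and need not vanish, so your argument would not extend verbatim, though neither would the paper's). One small simplification to your write-up: the identification $\mathbb C(X)^G=\mathbb C(X/G)$, which you flag as the delicate point, is not actually needed for the appeal to Hilbert 90 --- Artin's lemma already guarantees that $\mathbb C(X)$ is Galois over its fixed field with group $G$ whenever $G$ acts faithfully by field automorphisms. Your approach has the advantage of being purely formal once the two tautological exact sequences are written down; the paper's has the advantage of producing the invariant divisor explicitly as the divisor of an eigensection, which is in the same spirit as the Kummer-theoretic description of $R_{X,G}$ used later in Section 4.
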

\begin{proof} We may assume without the loss of generality that $\mathcal L$ is very ample by adding to $\mathcal L$ the linear equivalence class of the divisor $m\sum_{g\in G}g(y)$ for some $\mathbb C$-valued point $y$ and for a sufficiently large $m$. Let $g\in G$ be a generator and choose an isomorphism $\iota:\mathcal L\rightarrow g_*(\mathcal L)$. Note that $\pi_*$ furnishes an isomorphism $H^0(X/G,\pi_*(\mathcal L))\cong H^0(X,\mathcal L)$ and hence $H^0(X/G,\pi_*(\mathcal L))$ has positive dimension. Therefore there is a non-zero $s\in H^0(X/H,\pi_*(\mathcal L))$ which is an eigenvector for the automorphism:
$$\pi_*(\iota):\pi_*(\mathcal L)\longrightarrow\pi_*(g_*(\mathcal L))=
\pi_*(\mathcal L)$$
that we get by pushing forward $\iota$ with respect to $\pi$. The divisor of the image of $s$ with respect to the inverse of $\pi_*$ is a $G$-invariant divisor on $X$ whose linear equivalence class is $\mathcal L$. 
\end{proof} 
\begin{prop} Assume that $S(\mathbf E^{\Sigma}(X,G)_1)$ is non-empty. Then $X^G$ is non-empty too.
\end{prop}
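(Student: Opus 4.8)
The plan is to argue by contradiction: I assume $X^G=\emptyset$ and show that then $\mathbf E^{\Sigma}(X,G)_1$ admits no section at all, which contradicts the hypothesis that $S(\mathbf E^{\Sigma}(X,G)_1)$ is non-empty. The first reduction is group-theoretic and uses that $G$ is cyclic of prime order $p$: every stabiliser is a subgroup of $G$, hence either trivial or all of $G$, and a point with stabiliser $G$ is a global fixed point. Therefore $X^G=\emptyset$ forces the action to be \emph{free}. In that case the quotient map $\pi\colon X\to X/G$ is an unramified covering of degree $p$, the quotient $X/G$ is again a smooth projective curve (of some genus $g'$), and the orbifold fundamental group is the ordinary one, so that $\mathbf E(X,G)$ is simply the covering sequence $1\to\pi_1(X,x)\to\pi_1(X/G,\pi(x))\to G\to 1$.

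Next I would pass to $\mathcal C_{\Sigma}$-completions. Since $\pi_1(X,x)$ has finite index $p$ in $\pi_1(X/G,\pi(x))$ with quotient $G\in\mathcal C_{\Sigma}$, completion preserves exactness and identifies the middle term of $\mathbf E^{\Sigma}(X,G)_1$ with $\pi_1^{\Sigma}(X/G,\pi(x))/[\pi_1^{\Sigma}(X,x)]_1$. The structural point is that the projection $\psi$ onto $G$ factors through abelianisation: as $G$ is abelian, the classifying homomorphism $\pi_1^{\Sigma}(X/G,\pi(x))\to G$ kills the commutator subgroup, and because $[\pi_1^{\Sigma}(X,x)]_1$ is contained in the closure of the commutators of the larger group $\pi_1^{\Sigma}(X/G,\pi(x))$, the map $\psi$ factors as $\pi_1^{\Sigma}(X,G,x)/[\pi_1^{\Sigma}(X,x)]_1\to \pi_1^{\Sigma}(X/G,\pi(x))^{\mathrm{ab}}\to G$. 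The crucial feature is that $\pi_1^{\Sigma}(X/G,\pi(x))^{\mathrm{ab}}\cong H_1(X/G,\widehat{\mathbb Z}_{\Sigma})$, a torsion-free profinite group.

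To finish, let $s\colon G\to \pi_1^{\Sigma}(X,G,x)/[\pi_1^{\Sigma}(X,x)]_1$ be any section. Composing $s$ with the abelianisation map into $H_1(X/G,\widehat{\mathbb Z}_{\Sigma})$ gives a continuous homomorphism from the finite group $G$ into a torsion-free group, which must be trivial; feeding this into the factorisation of $\psi$ forces $\mathrm{id}_G=\psi\circ s$ to be trivial, contradicting $G\neq 1$. Hence no section exists in the free case, and the contrapositive yields the proposition. The only delicate points I would verify carefully are the exactness of $\mathcal C_{\Sigma}$-completion on the index-$p$ inclusion $\pi_1(X,x)\subset\pi_1(X/G,\pi(x))$ and the compatibility of completion with abelianisation; both rest on the goodness of these surface groups already exploited in Section~3. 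Beyond that the argument is just the remark that a finite group has no nontrivial homomorphism to a torsion-free target, so this route bypasses Propositions~4.2 and~4.3, which I expect are reserved for the finer computation in Theorem~4.7.
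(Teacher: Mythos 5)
Your argument is correct, but it is genuinely different from the one in the paper. You argue by contraposition: since $G$ has prime order, $X^G=\emptyset$ forces the action to be free, so $\pi_1(X,G,x)\cong\pi_1(X/G,\pi(x))$ is again a surface group of genus $g'\geq 1$ (unramified Riemann--Hurwitz), and because $G$ is abelian the projection $\psi$ onto $G$ factors through $\pi_1^{\Sigma}(X/G,\pi(x))^{\mathrm{ab}}\cong\widehat{\mathbb Z}_{\Sigma}^{2g'}$, a torsion-free group admitting no nontrivial homomorphism from the finite group $G$; hence no section of $\mathbf E^{\Sigma}(X,G)_1$ can exist. The only points needing care are the ones you flag --- exactness of $\mathcal C_{\Sigma}$-completion on the index-$p$ inclusion and its compatibility with abelianisation --- and both are already implicit in the paper's setup (Definition 2.3 and the finite-generation argument of Lemma 3.5), so there is no gap. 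The paper instead argues directly: it pushes the section forward along the $G$-equivariant Abel--Jacobi map $X\to\mathrm{Pic}^1(X)$, invokes the abelianised section conjecture for abelian varieties (Corollary 3.7) to produce a point of $\mathrm{Pic}^1(X)^G$, represents that class by a $G$-invariant divisor of degree one (Proposition 4.3), and concludes by a mod-$p$ degree count that the divisor must be supported partly on $X^G$. Your route is shorter and more elementary, bypassing the Jacobian entirely and in particular not needing Theorem 3.6; the paper's route is heavier but deliberately routes through Propositions 4.2 and 4.3, which are needed anyway for the explicit computation of the abelianised section map in Theorem 4.7.
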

\begin{proof} Let $[\cdot]:X\rightarrow\textrm{\rm Pic}^1(X)$ be the morphism which maps every point $x$ to its linear equivalence class $[x]$. This map is $G$-equivariant, so it induces a commutative diagram:
$$
\CD1\rightarrow\pi^{\Sigma}_1(X,x)/[\pi^{\Sigma}_1(X,x)]_1@>>>
\pi^{\Sigma}_1(X,G,x)/[\pi^{\Sigma}_1(X,x)]_1@>>>G\rightarrow1\\
@VVV@VVV\!\!\!\!\!\!\!\!\!\!\!\!
@|\\
1\rightarrow\frac{\displaystyle{\pi^{\Sigma}_1(\textrm{Pic}^1(X),[x])}}{\displaystyle{[\pi^{\Sigma}_1(\textrm{Pic}^1(X),[x])]_1}}@>>>
\frac{\displaystyle{\pi^{\Sigma}_1(\textrm{Pic}^1(X),G,[x])}}
{\displaystyle{[\pi^{\Sigma}_1(\textrm{Pic}^1(X),[x])]_1}}
@>>>G\rightarrow1,\endCD
$$
and hence we get that there is a commutative diagram:
$$
\CD X^G=\pi_0(X^G)@>s^{1,\Sigma}_{X,G}>>S(\mathbf E^{\Sigma}(X,G)_1)\\
@V\pi_0([\cdot]|_{X^G})VV@VVV\\
\pi_0(\textrm{Pic}^1(X)^G)@>s^{1,\Sigma}_{\textrm{Pic}^1(X),G}>>
S(\mathbf E^{\Sigma}(\textrm{Pic}^1(X),G)_1).
\endCD
$$
In particular we get that if $S(\mathbf E^{\Sigma}(X,G)_1)$ is non-empty then $S(\mathbf E^{\Sigma}(\textrm{Pic}^1(X),G)_1)$ is non-empty, too. Therefore $\textrm{\rm Pic}^1(X)^G$ is non-empty by Corollary 3.7, and hence by Proposition 4.3 there is a $G$-invariant divisor $D$ of degree one on $X$. Then $D$ can be written in the form:
$$\sum_{y\in X^G}n_y[y]+\sum_{z\in X-X^G}n_z(\sum_{g\in G}[g(z)]),$$
where $n_y,n_z\in\mathbb N$ and all but finitely many $n_z$-s are zero. Since the degree of the second summand is divisible by $p$ we get that first summand is non-zero. In particular $X^G$ is non-empty.
\end{proof} 
For every finite set $S$ let $\mathbb F_p\textrm{Div}^0(S)$ denote the group of formal linear combinations of elements of $S$ of the form:
$$\{\sum_{y\in S}n_yy|n_y\in\mathbb F_p,\ \sum_{y\in S}n_y=0\}.$$
For every smooth projective connected curve $Y$ over $\mathbb C$ let $F(Y)$ denote the function field of $Y$. By Kummer theory there is an $\alpha\in F(X)$ such that as an extension of $F(X/G)$ we have $F(X)=F(X/G)(\alpha)$ and $\alpha^p\in F(X/G)$.
\begin{lemma} Let $\alpha\in F(X)$ be such that $F(X)=F(X/G)(\alpha)$ and $\alpha^p\in F(X/G)$. Let $r(\alpha)$ denote the part of the divisor of $\alpha$ supported on $X^G$ mod $p$. Then the support of $r(\alpha)$ is $X^G$ and the $\mathbb F_p$-module generated by $r(\alpha)$ in $\mathbb F_p\textrm{\rm Div}^0(X^G)$ is independent of the choice of $\alpha$. 
\end{lemma}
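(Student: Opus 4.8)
The plan is to work throughout with the cyclic Galois extension $F(X)/F(X/G)$, whose group is $G$, and to analyse $\mathrm{div}(\alpha)$ one $G$-orbit and one point at a time. First I would record three elementary facts. Since $\alpha^p\in F(X/G)$ is fixed by $G$, for a generator $\sigma$ of $G$ we have $\sigma(\alpha)^p=\alpha^p$, so $\sigma(\alpha)/\alpha$ is a $p$-th root of unity, i.e. a nonzero constant; hence $\sigma(\alpha)$ and $\alpha$ have the same divisor and $D:=\mathrm{div}(\alpha)$ is a $G$-invariant divisor on $X$. Because $G$ has prime order $p$, every $G$-orbit on $X$ has either $p$ points or one point, the one-point orbits being exactly $X^G$; these are precisely the (totally, tamely) ramified points of $\pi$, with a single point of $X^G$ of ramification index $p$ lying over each branch point. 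Writing $D$ as the sum of its $X^G$-part and a sum of full free orbits, and using $\deg D=0$ together with the fact that each free orbit contributes degree divisible by $p$, I get that the $X^G$-part of $D$ has degree divisible by $p$. This already shows $r(\alpha)\in\mathbb F_p\textrm{Div}^0(X^G)$.

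Next I would prove that the support of $r(\alpha)$ is all of $X^G$ by a local computation at each $P\in X^G$. Let $Q=\pi(P)$ and pass to the completions $\widehat{\mathcal O}_{X/G,Q}=\mathbb C[[t]]$ and $\widehat{\mathcal O}_{X,P}=\mathbb C[[s]]$; since $P$ is the unique point over $Q$, the local extension $\mathbb C((s))/\mathbb C((t))$ equals $\mathbb C((t))(\alpha)$ and is totally ramified of degree $p$, so $\alpha\notin\mathbb C((t))$. Writing $\alpha=s^{a}u$ with $a=v_P(\alpha)$ and $u$ a unit, and using that $\alpha^{p}\in F(X/G)$ lies in $\mathbb C((t))=\mathbb C((s^{p}))$ while $s^{pa}=t^{a}\in\mathbb C((t))$, I find $u^{p}\in\mathbb C((s^{p}))$. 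The crucial point is that this forces $u\in\mathbb C((s^{p}))=\mathbb C((t))$: after scaling by a constant one reduces to a principal unit, and in characteristic zero the group $1+s\mathbb C[[s]]$ is uniquely $p$-divisible (via the logarithm), so the unique $p$-th root of $u^{p}\in 1+s^{p}\mathbb C[[s^{p}]]$ already lies in $1+s^{p}\mathbb C[[s^{p}]]$. Consequently, were $a$ divisible by $p$ we would get $\alpha=s^{a}u\in\mathbb C((t))$, contradicting that $\alpha$ generates the ramified extension; hence $v_P(\alpha)\not\equiv 0\pmod p$ for every $P\in X^G$, so the support of $r(\alpha)$ is $X^G$. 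I expect this local step to be the main obstacle, since it is where tameness and the characteristic-zero hypothesis genuinely enter.

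Finally, to see that the $\mathbb F_p$-span of $r(\alpha)$ is independent of $\alpha$, I would invoke Kummer theory: any other admissible generator $\alpha'$ has the same class up to a nonzero power in the cyclic group $\{\beta\in F(X)^{*}:\beta^{p}\in F(X/G)^{*}\}/F(X/G)^{*}$, so $\alpha'=c\,\alpha^{k}$ for some $c\in F(X/G)^{*}$ and some $k$ with $1\le k\le p-1$. Then $\mathrm{div}(\alpha')=\mathrm{div}(c)+k\,\mathrm{div}(\alpha)$, and $\mathrm{div}(c)=\pi^{*}\mathrm{div}_{X/G}(c)$ has, at every $P\in X^G$, valuation $p\cdot v_{\pi(P)}(c)\equiv 0\pmod p$; thus the $X^G$-part of $\mathrm{div}(c)$ vanishes mod $p$ and $r(\alpha')=k\,r(\alpha)$. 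Since $k$ is invertible in $\mathbb F_p$, the elements $r(\alpha')$ and $r(\alpha)$ generate the same $\mathbb F_p$-module, which completes the proof.
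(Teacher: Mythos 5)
Your proof is correct and follows essentially the same route as the paper's: both arguments rest on the facts that $v_P(\alpha)$ is prime to $p$ exactly at the ramification points, i.e.\ at the points of $X^G$, and that any two admissible generators differ by an element of $F(X/G)^*$ times a prime-to-$p$ power of one another, so that $r(\alpha')=k\,r(\alpha)$ with $k$ invertible mod $p$. The only differences are presentational: the paper states the local valuation fact in one sentence where you prove it via unique $p$-divisibility of principal units in $\mathbb C[[s]]$, and it obtains $\beta/\alpha^i\in F(X/G)$ from the eigenvalue computation $g(\alpha)=\eta\alpha$, $g(\beta)=\eta^i\beta$ rather than by citing Kummer theory, which amounts to the same thing.
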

\begin{proof} For every point $y\in X$ the value of the normalised valuation corresponding to $y$ taken on $\alpha$ is divisible by $p$ if an only if $\pi$ is unramified at $y$. The first claim is now clear. Let $\beta\in F(X)$ be another element such that $F(X)=F(X/G)(\beta)$ and $\beta^p\in F(X/G)$. By Galois theory there are primitive $p$-th roots of unity $\eta,\zeta\in\mathbb C^*$ such that $g(\alpha)=\eta\alpha$ and $g(\beta)=\zeta\beta$ where $g$ is a generator of the Galois group $G$ of the extension $F(X)/F(X/G)$. There is a natural number $i$ not divisible by $p$ such that $\zeta=\eta^i$ and hence $\beta/\alpha^i\in F(X/G)$ by Galois theory. Therefore the part of the divisor of $\beta$ supported on $X^G$ mod $p$ is $i$ times that of $\alpha$ mod $p$. The claim is now clear.
\end{proof}
Let $R_{X,G}\leq\mathbb F_p\textrm{Div}^0(X^G)$ denote the $\mathbb F_p$-module generated by the part of the divisor of $\alpha$ supported on $X^G$ mod $p$ for any $\alpha$ as above. 
\begin{notn} For every pro-group $\pi$ as in the introduction let $\pi^{ab}$ denote the quotient $\pi/[\pi]_1$. Let $r:G\rightarrow
\widetilde{\pi}/[\pi]_1$ and $s:G\rightarrow \widetilde{\pi}/[\pi]_1$ be two sections of the short exact sequence (1.0.2) (for $n=1$). Then the map
$$G\longrightarrow\pi^{ab},\quad g\mapsto r(g)s(g)^{-1}$$
is a $1$-cocycle taking values in the $G$-module $\pi^{ab}$ and its cohomology class only depends on the respective conjugacy classes $[r]$ and $[s]$ of $r$ and $s$. Let $[r]-[s]\in H^1(G,\pi^{ab})$ denote this cohomology class. Let us assume once more that $X$ is a general algebraic variety over $\mathbb C$ equipped with the action of the finite group $G$. Note that the cohomology group $H^1(G,\pi^{\Sigma}_1(X,x)^{ab})$ is $p$-torsion. Therefore there is a unique $\mathbb F_p$-linear map
$$s^{ab}_{X,G}:\mathbb F_p\textrm{Div}^0(\pi_0(X^G))\longrightarrow H^1(G,\pi^{\Sigma}_1(X,x)^{ab})$$
such that:
$$s^{ab}_{X,G}(y-z)=s^{1,\Sigma}_{X,G}(y)-s^{1,\Sigma}_{X,G}(z)\quad(\forall y,\forall z\in\pi_0(X^G)).$$
\end{notn}
Assume again that $X$ is a smooth, projective, geometrically irreducible curve of positive genus over $\mathbb C$ and $G$ is a group of prime order $p$ acting on $X$ non-trivially.
\begin{thm} Assume that $X^G$ is non-empty. Then the map
$$s^{ab}_{X,G}:\mathbb F_p\textrm{\rm Div}^0(X^G)\longrightarrow H^1(G,\pi^{\Sigma}_1(X,x)^{ab})$$
is surjective and its kernel is $R_{X,G}$.
\end{thm}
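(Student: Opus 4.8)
The plan is to transport the entire question to the Jacobian, where Corollary 3.7 has already settled the analogous statement, and then to pin down the kernel by an explicit Kummer-theoretic computation. Write $A=\textrm{Pic}^0(X)$, $P=\textrm{Pic}^1(X)$, $Y=X/G$ and $\pi^{ab}=\pi_1^{\Sigma}(X,x)^{ab}$. The $G$-equivariant Abel--Jacobi map $[\cdot]:X\to P$ induces an isomorphism $\pi^{ab}\xrightarrow{\sim}\pi_1^{\Sigma}(P,[x])$ (the fundamental group of the torsor $P$ under $A$ is abelian and is identified with $\pi^{ab}$ via $[\cdot]$), so the groups $H^1(G,\pi^{ab})$ attached to $X$ and to $P$ are canonically the same. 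The commutative diagram at the end of the proof of Proposition 4.4 then gives $s^{ab}_{X,G}=s^{ab}_{P,G}\circ[\cdot]_*$, where $[\cdot]_*:\mathbb F_p\textrm{Div}^0(X^G)\to\mathbb F_p\textrm{Div}^0(\pi_0(P^G))$ is induced by $y\mapsto[y]$. Since $X^G\neq\emptyset$ one may choose a fixed point as origin and regard $P$ as an abelian variety isomorphic to $A$; then $S(\mathbf E^{\Sigma}(P,G)_1)$ is a torsor under $H^1(G,\pi^{ab})$, and by Corollary 3.7 the map $s^{1,\Sigma}_{P,G}$ identifies $\pi_0(P^G)$ with this torsor. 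Hence the difference map $\bar\kappa:\pi_0(A^G)\to H^1(G,\pi^{ab})$ is a group isomorphism and $s^{ab}_{X,G}=\bar\kappa\circ\Phi$, where $\Phi:\mathbb F_p\textrm{Div}^0(X^G)\to\pi_0(A^G)$ sends $\sum_y n_y y$ to the class of $\sum_y\tilde n_y[y]$ for any integral lift with $\sum\tilde n_y=0$. This $\Phi$ is well defined because $\pi_0(A^G)$ is $p$-torsion (Theorem 3.6) and because $B:=\pi^*\textrm{Pic}^0(Y)$, the identity component of $A^G$ by Proposition 4.2, satisfies $pA^G=B$ (as $B$ is divisible while $A^G/B$ is $p$-torsion).

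Since $\bar\kappa$ is an isomorphism it now suffices to analyse $\Phi$. Surjectivity of $\Phi$, hence of $s^{ab}_{X,G}$, follows from Proposition 4.3: every class of $A^G=\textrm{Pic}^0(X)^G$ is represented by a $G$-invariant divisor $\sum_{y\in X^G}n_y y+\sum_z n_z\bigl(\sum_{g\in G}g(z)\bigr)$, and the orbit part lies in $\textrm{im}(\pi^*)=B$, so modulo $B$ every class is represented by a divisor supported on $X^G$.

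The heart of the matter is the identity $\ker\Phi=R_{X,G}$. Fix the Kummer generator $\alpha$ with $g(\alpha)=\eta\alpha$ for a primitive $p$-th root of unity $\eta$, where $g$ generates $G$. Given $\xi=\sum_y n_y y\in\ker\Phi$, lift it to a degree-zero $G$-invariant divisor $D_0=\sum_y\tilde n_y y$; then $[D_0]\in B$, so $D_0-\pi^*M=\textrm{div}(\beta)$ for some $\beta\in F(X)^*$ and some $M\in\textrm{Div}^0(Y)$ supported away from the branch locus. Because $\textrm{div}(\beta)$ is $G$-invariant, $g(\beta)/\beta$ is a constant $p$-th root of unity $\eta^i$; comparing with $\alpha$ gives $\beta/\alpha^i\in F(Y)^*$, so $\textrm{div}(\beta)-i\,\textrm{div}(\alpha)$ is pulled back from $Y$ and therefore has all valuations at $X^G$ divisible by $p$. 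As $\pi^*M$ avoids $X^G$, this forces $n_y\equiv i\,v_y(\alpha)\pmod p$ for every $y\in X^G$, i.e. $\xi=i\cdot r(\alpha)\in R_{X,G}$. The reverse inclusion runs the same computation backwards: using $p\cdot y=\pi^*(\pi(y))$ at a totally ramified point one rewrites $D_0-\textrm{div}(\alpha^i)$ as a pullback $\pi^*M$ with $\deg M=0$, whence $[D_0]=\pi^*[M]\in B$. Together with Lemma 4.5 (that $R_{X,G}$ is the well-defined line spanned by $r(\alpha)$) this yields $\ker s^{ab}_{X,G}=\ker\Phi=R_{X,G}$.

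I expect the main obstacle to be the bookkeeping in the reduction $s^{ab}_{X,G}=\bar\kappa\circ\Phi$: one must verify that the abelianised section difference for the abelian variety $P$ is exactly the Kummer connecting map implicit in the proof of Theorem 3.6, and keep careful track of the degree-zero conditions and integral lifts so that $\Phi$ is well defined into the $p$-torsion group $\pi_0(A^G)$. The Kummer step itself, producing the constant $\eta^i$ and comparing it with $\alpha$, is the conceptually decisive point, but it is short once this reduction is in place.
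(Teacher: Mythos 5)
Your proposal is correct and follows essentially the same route as the paper: both transport the problem to $\textrm{Pic}^0(X)$ via the Abel--Jacobi map and Corollary 3.7, identify the components of $\textrm{Pic}^0(X)^G$ using Propositions 4.2 and 4.3, and compute the kernel by the Kummer eigenvector argument ($g(\beta)=\eta^i\beta$, hence $\beta/\alpha^i\in F(X/G)$) underlying Lemma 4.5. Your version merely phrases the kernel computation via explicit valuations at the ramification points rather than via the paper's quotient presentation $\textrm{Div}^0(X)^G/(\pi^*(\textrm{Div}^0(X/G))+\textrm{Prin}(X)^G)$, which is the same calculation.
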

\begin{rem} It is immediate from the theorem above that the map $s^{1,\Sigma}_{X,G}$ is not surjective as soon as $|X^G|>4$. If $|X^G|>2$ then no element of $\textrm{\rm Div}^0(X^G)$ with support on only two points of $X^G$ can lie in $R_{X,G}$ by the first claim of Lemma 4.5, so Corollary 1.2 follows, too.
\end{rem}
\begin{proof}[Proof of Theorem 3.8] For every $z\in\textrm{Pic}^0(X)^G$ let $\overline z$ denote its connected component in $\pi_0(\textrm{Pic}^0(X)^G)$. Let $0$ denote the identity element of $\textrm{Pic}^0(X)$. There is a unique group homomorphism:
$$s^0:\pi_0(\textrm{Pic}^0(X)^G)\longrightarrow H^1(G,\pi^{\Sigma}_1(\textrm{Pic}^0(X),0)^{ab})$$
such that:
$$s^0(z)=s^{1,\Sigma}_{\textrm{Pic}^0(X),G}(z)-s^{1,\Sigma}_{\textrm{Pic}^0(X),G}(\overline 0)\quad(\forall z\in\pi_0(\textrm{Pic}^0(X)^G)).$$
The map $s^0$ is a bijection by Corollary 3.7. Therefore the commutative group $\pi_0(\textrm{Pic}^0(X)^G)$ is $p$-torsion. Let $[\cdot]:X\rightarrow\textrm{\rm Pic}^1(X)$ be the same map as in the proof of Proposition 4.4. By the above there is a unique group homomorphism:
$$\mathbb F_p\pi_0:\mathbb F_p\textrm{\rm Div}^0(X^G)\longrightarrow \pi_0(\textrm{Pic}^0(X)^G)$$
such that
$$\mathbb F_p\pi_0(z-y)=\overline{[z]-[y]}\quad(\forall y,\forall z\in X^G).$$
Let $\tau_x:\textrm{\rm Pic}^1(X)\rightarrow\textrm{\rm Pic}^0(X)$ be the map given by the rule $z\mapsto z-[x]$. Clearly the diagram
$$
\CD\mathbb F_p\textrm{\rm Div}^0(X^G)@>s^{ab}_{X,G}>>H^1(G,\pi^{\Sigma}_1(X,x)^{ab})\\
@V\mathbb F_p\pi_0VV@VV{\phi}V\\
\pi_0(\textrm{Pic}^0(X)^G)@>s^0>>
H^1(G,\pi^{\Sigma}_1(\textrm{Pic}^0(X),0)^{ab})
\endCD
$$
is commutative where $\phi$ is furnished by $\tau_x\circ[\cdot]:X\rightarrow\textrm{Pic}^0(X)$. Since $\tau_x\circ[\cdot]$ induces an isomorphism between the abelianised fundamental groups of $X$ and $\textrm{\rm Pic}^0(X)$ we get that $\phi$ is an isomorphism, too. Therefore it will be enough to show that $\mathbb F_p\pi_0$ is surjective and its kernel is $R_{X,G}$.

For every smooth projective connected curve $Y$ over $\mathbb C$ let  $\textrm{Div}^0(Y)$, $\textrm{Prin}(Y)$ denote the group of degree zero divisors on $Y$, and its subgroup of principal divisors on $Y$, respectively. Let $\textrm{Div}^0(X)^G$, $\textrm{Prin}(X)^G$ denote the group of degree zero $G$-invariant divisors on $X$ and its subgroup of principal $G$-invariant divisors on $X$, respectively. By Propositions 4.2 and 4.3 we have: $$\pi_0(\textrm{Pic}^0(X)^G)=\textrm{Div}^0(X)^G/\big(
\pi^*(\textrm{Div}^0(X/G))+\textrm{Prin}(X)^G\big).$$
Every $G$-invariant divisor $D$ of degree zero on $X$ can be written in the form:
$$\sum_{u\in X^G}n_u[u]+\sum_{z\in X-X^G}n_z(\sum_{g\in G}[g(z)]),$$
where $n_u,n_z\in\mathbb N$, all but finitely many $n_z$-s are non-zero, and $$\sum_{u\in X^G}n_u+\sum_{z\in X-X^G}pn_z=0.$$
We have $D\in\pi^*(\textrm{Div}^0(X/G))$ if and only if $p$ divides $n_u$ for every $u\in X^G$. We get that
$$\mathbb F_p\textrm{\rm Div}^0(X^G)=\textrm{Div}^0(X)^G/
\pi^*(\textrm{Div}^0(X/G)).$$
It will be enough to determine the image of $\textrm{Prin}(X)^G$ in $\mathbb F_p\textrm{\rm Div}^0(X^G)$ under this identification. Let $\beta\in F(X)^*$ be an element such that its divisor $(\beta)$ is $G$-invariant. Let $g$ be a generator of the Galois group $G$ of the extension $F(X)/F(X/G)$. Then $(\beta)=(g(\beta))$ and hence $\beta$ is an eigenvector of the $\mathbb C$-linear map $g$. Since the order of $g$ is $p$ we get that $g(\beta)=\eta\beta$ where $\eta\in\mathbb C^*$ is a $p$-th root of unity. Therefore $\beta^p\in F(X/G)$ by Galois theory. The claim now follows from Lemma 4.5.
\end{proof}

\section{The nilpotent section conjecture}

For every smooth projective irreducible curve $Y$ over $\mathbb C$, let $g_Y$ denote its genus.
\begin{prop} The map
$$s^{\infty,\Sigma}_{X,G}:X^G\longrightarrow S(\mathbf E^{\Sigma}(X,G)_{\infty})$$
is injective.
\end{prop}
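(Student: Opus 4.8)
The plan is to prove injectivity of $s^{\infty,\Sigma}_{X,G}$ by reducing the question about the full nilpotent quotient $\mathbf E^{\Sigma}(X,G)_\infty$ to the abelianised case already settled in Theorem 4.7, and then exploiting the fact that distinct fixed points are already separated at the first level of the central series. First I would observe that, by the commutative diagram

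\begin{equation}
\CD
S(\mathbf E^{\Sigma}(X,G)_{\infty}) @>{d^{1,\Sigma}_{X,G}\circ\cdots}>> S(\mathbf E^{\Sigma}(X,G)_1)
\endCD
\end{equation}

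coming from the projection $\pi^{\Sigma}_1(X,x)/[\pi^{\Sigma}_1(X,x)]_\infty \to \pi^{\Sigma}_1(X,x)^{ab}$, the map $s^{\infty,\Sigma}_{X,G}$ factors through $s^{1,\Sigma}_{X,G}$. Hence it suffices to separate any two distinct fixed points $y,z\in X^G$ at the abelianised level, i.e. to show $s^{1,\Sigma}_{X,G}(y)\neq s^{1,\Sigma}_{X,G}(z)$. This is exactly the content of the difference class $s^{ab}_{X,G}(y-z)\in H^1(G,\pi^{\Sigma}_1(X,x)^{ab})$ from Notation 4.6, so the problem becomes: $y-z\notin R_{X,G}$ whenever $y\neq z$.

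Next I would invoke Theorem 4.7, whose kernel statement identifies $\ker s^{ab}_{X,G}=R_{X,G}$, together with the first assertion of Lemma 4.5 that the support of $r(\alpha)$ is \emph{all} of $X^G$. The key point is that an element of $R_{X,G}$ is (a scalar multiple of) $r(\alpha)$, whose support is the entire set $X^G$; therefore a two-point difference $y-z$ with $y\neq z$ can lie in $R_{X,G}$ only in the exceptional case $|X^G|=2$, as already noted in Remark 4.8. To handle $|X^G|=2$ I would pass to the second nilpotent quotient: distinct fixed points that become conjugate at level $1$ must be separated at level $2$, and here I expect to need the obstruction-theoretic machinery of Theorem 1.3 (the $2$-nilpotent section conjecture) rather than the purely abelian computation.

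The main obstacle will be precisely this borderline case $|X^G|=2$, where the abelianised invariant fails to distinguish the two fixed points and one genuinely needs the second-order information. The cleanest route is to show that the coboundary $d^{2,\Sigma}_{X,G}$ does not identify the two level-$2$ classes attached to $y$ and $z$; equivalently, that the relevant class in $H^2(G,-)$ obstructing a common lift is non-trivial. I would organize this by choosing explicit lifts of the sections $h_{\widetilde y}$ and $h_{\widetilde z}$ into $\pi^{\Sigma}_1(X,G,x)/[\pi^{\Sigma}_1(X,x)]_2$ and computing their difference cocycle, using that $[\pi^{\Sigma}_1(X,x)]_1/[\pi^{\Sigma}_1(X,x)]_2$ is the relevant $G$-module of commutators. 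The remaining verification — that this second-order difference is nonzero — is the crux; once it is in place, injectivity at level $\infty$ follows immediately since the factorization through level $2$ already separates every pair of distinct fixed points.
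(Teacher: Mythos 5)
The first half of your argument is correct and agrees with the paper's: since $s^{1,\Sigma}_{X,G}$ is the composition of $s^{\infty,\Sigma}_{X,G}$ with the projection $S(\mathbf E^{\Sigma}(X,G)_{\infty})\to S(\mathbf E^{\Sigma}(X,G)_{1})$ (so it is $s^{1,\Sigma}_{X,G}$ that factors through $s^{\infty,\Sigma}_{X,G}$, not the other way around as you write, though your conclusion is the right one), injectivity at level $1$ implies injectivity at level $\infty$; and by Theorem 4.7 together with the support statement of Lemma 4.5, two distinct fixed points $y,z$ can satisfy $y-z\in R_{X,G}$ only when $|X^G|=2$. The genuine gap is that this exceptional case, which is the entire content of the paper's proof of Proposition 5.1, is left unproved: you yourself identify the nonvanishing of the ``second-order difference'' as the crux and do not verify it. Worse, the strategy you sketch for it is not viable as stated. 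The obstruction machinery of Section 6 (Theorem 1.3, Lemmas 6.5 and 6.8) is developed only for $|G|=2$, whereas Proposition 5.1 concerns an arbitrary prime $p$; and even for $p=2$, Theorem 1.3 describes the \emph{image} of $s^{1,\Sigma}_{X,G}$, not its fibres, so it cannot tell you that the two level-$2$ classes attached to the two fixed points are distinct. Note also that when $|X^G|=2$ Theorem 4.7 forces $H^1(G,\pi^{\Sigma}_1(X,x)^{ab})=0$, so there is no level-$1$ difference cocycle to work with, and it is not at all clear that level $2$ suffices to separate the two points.

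The paper handles $|X^G|=2$ by an entirely different, geometric device which you would need to supply (or replace): let $\pi:Y\to X$ be the finite \'etale Galois cover with Galois group $\mathbb F_p^{2g_X}$, and lift the $G$-action to $Y$ using the section attached to one of the two fixed points; the corresponding subgroup of $\pi^{\Sigma}_1(X,x)$ contains $[\pi^{\Sigma}_1(X,x)]_1\supseteq[\pi^{\Sigma}_1(X,x)]_{\infty}$, so this is compatible with sections at level $\infty$. Each fibre $\pi^{-1}(y)$ with $y\in X^G$ is $G$-invariant of $p$-power cardinality and contains a $G$-fixed point, hence contains at least $p$ of them by counting orbits mod $p$; therefore $|Y^G|\geq 2p\geq 3$, Corollary 1.2 applies to $Y$ to give injectivity of $s^{1,\{p\}}_{Y,G}$, and injectivity of $s^{\infty,\Sigma}_{X,G}$ follows. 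The key idea you are missing is precisely this passage to an auxiliary cover on which the fixed-point set becomes large enough for the abelianised criterion to apply, rather than an attempt to extract more information from deeper nilpotent quotients of $X$ itself.
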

\begin{proof} Clearly the map $s^{\infty,\Sigma}_{X,G}$ is injective if $s^{1,\Sigma}_{X,G}$ is. Assume that this is not the case; then $|X^G|=2$ by Corollary 1.2. Let $\pi:Y\rightarrow X$ be the unique finite \'etale Galois cover with Galois group isomorphic to $\mathbb F_p^{2g_X}$. Equip $Y$ with the $G$-action corresponding to the section of any of the two points in $X^G$. The map $\pi$ is $G$-equivariant with respect to this action. For both points $y\in X^G$ the set $\pi^{-1}(y)$ is $G$-invariant, and by assumption contains a point fixed by $G$. Moreover the cardinality of $\pi^{-1}(y)$ is $p^{2g_Y}$, and hence the number of points fixed by $G$ in this set must be divisible by $p$. We get that $2p\leq|Y^G|$. Hence $s^{1,\{p\}}_{Y,G}$ is injective and so is $s^{\infty,\Sigma}_{X,G}$.
\end{proof}
The next result is the usual pro-$\Sigma$ section conjecture for group actions on curves (Theorem 5.11 of \cite{Pa} on page 366) in the special case when $G$ has prime order. The argument in the proof of Proposition 2.5 can be used to reduce the general case to this one. 
\begin{thm} The map
$$s^{\Sigma}_{X,G}:X^G\longrightarrow S(\mathbf E^{\Sigma}(X,G))$$
is a bijection.
\end{thm}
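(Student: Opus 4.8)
The plan is to separate the two halves of the statement: injectivity will be immediate from what precedes, while surjectivity carries the real content. For injectivity I would simply use the factorisation $s^{\infty,\Sigma}_{X,G}=f_{\infty}(\mathbf E^{\Sigma}(X,G))\circ s^{\Sigma}_{X,G}$, which holds by the very definition of the two maps. Proposition 5.1 asserts that the composite $s^{\infty,\Sigma}_{X,G}$ is injective, and a composite can be injective only if its first factor is; hence $s^{\Sigma}_{X,G}$ is injective.

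For surjectivity the tool is Tamagawa's trick (Lemma 3.3): as $X$ is compact, the conjugacy class of a section $s$ of (1.0.4) lies in the image of $s^{\Sigma}_{X,G}$ precisely when $X^G_K\neq\emptyset$ for the action $h^K_s$, for every $K\in I(\mathbf X,\mathcal C_{\Sigma},G)$. I would therefore fix such an $s$ and an arbitrary $K$ and try to produce a fixed point on $X_K$. The cover $X_K$ is a finite \'etale cover of $X$, so it is again a smooth, projective, connected curve, and since it is unramified Riemann--Hurwitz gives $2g_{X_K}-2=\deg(\pi_K)(2g_X-2)\geq0$; as $g_X\geq1$ this forces $g_{X_K}\geq1$, so $X_K$ meets the positive-genus hypothesis of Section 4.

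The crux is to transport the section from $X$ to $X_K$. The key point is that, by the construction in Definition 3.2, the element $s(g)$ descends through $\pi^K$ to the homeomorphism $h^K_s(g)$: if $\widetilde g_K\in\pi_1(X,G,x)$ is a lift with $j(\widetilde g_K)$ in the same $K$-coset as $s(g)$, then $s(g)j(\widetilde g_K)^{-1}$ lies in $\pi^{\Sigma}_1(X,x)\cap K=\pi^{\Sigma}_1(X_K,x_K)$, because both $s(g)$ and $j(\widetilde g_K)$ map to $g\in G$, and this subgroup descends to the identity on $X_K$. Consequently $s(g)$ lies in the orbifold group $\pi^{\Sigma}_1(X_K,G,x_K)$ of the action $h^K_s$, which, using the $G$-equivariance of $\pi_K$, is identified with the open subgroup of $\pi^{\Sigma}_1(X,G,x)$ fitting in $1\to\pi^{\Sigma}_1(X_K,x_K)\to\pi^{\Sigma}_1(X_K,G,x_K)\to G\to1$. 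Thus $s$ restricts to a section $s_K$ of $\mathbf E^{\Sigma}(X_K,G)$, and its abelianisation shows that $S(\mathbf E^{\Sigma}(X_K,G)_1)$ is non-empty.

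To finish I would apply Proposition 4.4 on $X_K$. If $G$ acts non-trivially on $X_K$ then $X_K$ satisfies the standing hypotheses of Section 4, so non-emptiness of $S(\mathbf E^{\Sigma}(X_K,G)_1)$ forces $X^G_K\neq\emptyset$; and if $G$ acts trivially then $X^G_K=X_K\neq\emptyset$ outright. Either way the Tamagawa criterion holds for every $K$, so $s$ lies in the image of $s^{\Sigma}_{X,G}$, giving surjectivity. I expect the \emph{main obstacle} to be exactly the middle step: checking carefully that $s$ descends to a section on each finite cover, namely that $s(g)$ and the lift $\widetilde g_K$ induce the same automorphism of $X_K$ and that $\pi^{\Sigma}_1(X_K,G,x_K)$ is genuinely the open subgroup of $\pi^{\Sigma}_1(X,G,x)$ containing $s(G)$. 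Once this is in place, the remaining appeals to Proposition 4.4 and Lemma 3.3 are formal.
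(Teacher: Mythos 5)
Your proposal is correct and takes essentially the same route as the paper: injectivity via the factorisation of $s^{\infty,\Sigma}_{X,G}$ through $s^{\Sigma}_{X,G}$ together with Proposition 5.1, and surjectivity by combining Tamagawa's trick (Lemma 3.3) with Proposition 4.4 applied to each cover $X_K$. The paper's own proof is only four sentences long and leaves implicit the descent of the section to the covers $X_K$ and the positivity of their genus; your write-up simply supplies those details.
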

\begin{proof} By Proposition 5.1 above the map $s^{\infty,\Sigma}_{X,G}$ is injective, therefore $s^{\Sigma}_{X,G}$ is also injective. In order to show that $s^{\Sigma}_{X,G}$ is surjective, it will be enough to show that if $X^G$ is non-empty when $S(\mathbf E^{\Sigma}(A,G))$ is non-empty, by Tamagawa's trick. If $S(\mathbf E^{\Sigma}(A,G))$ is non-empty then $S(\mathbf E^{\Sigma}(A,G)_1)$ is also non-empty. The claim now follows from Proposition 4.4.
\end{proof}
Let
$$f_{\Sigma,p}(X,G):S(\mathbf E^{\Sigma}(X,G)_{\infty})
\longrightarrow S(\mathbf E^{\{p\}}(X,G)_{\infty})$$
denote the map which assigns to the conjugacy class of a section $s$ of $\mathbf E^{\Sigma}(X,G)_{\infty}$ the conjugacy class of the composition of $s$ and the quotient map:
\begin{equation}
\pi_1^{\Sigma}(X,x)/[\pi_1^{\Sigma}(X,x)]_{\infty}\longrightarrow
\pi_1^{\{p\}}(X,x)/[\pi_1^{\{p\}}(X,x)]_{\infty}.
\end{equation}
\begin{lemma} The map $f_{\Sigma,p}(X,G)$ is a bijection.
\end{lemma}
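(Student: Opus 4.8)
The plan is to exploit the fact that the quotient map (5.3.1) is nothing but projection onto a pronilpotent Sylow factor. Write $N^{\Sigma}=\pi_1^{\Sigma}(X,x)/[\pi_1^{\Sigma}(X,x)]_{\infty}$ and $\widetilde N^{\Sigma}=\pi_1^{\Sigma}(X,G,x)/[\pi_1^{\Sigma}(X,x)]_{\infty}$, so that $\mathbf E^{\Sigma}(X,G)_{\infty}$ is the extension $1\to N^{\Sigma}\to\widetilde N^{\Sigma}\to G\to1$. Since $N^{\Sigma}$ is a pronilpotent group, it is the direct product of its Sylow pro-$\ell$ subgroups $N_{\ell}$, taken over the primes $\ell\in\Sigma$, and $N_{\ell}$ is canonically the pro-$\ell$ object $\pi_1^{\{\ell\}}(X,x)/[\pi_1^{\{\ell\}}(X,x)]_{\infty}$. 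Each $N_{\ell}$ is characteristic in $N^{\Sigma}$, hence stable under the $G$-action and normal in $\widetilde N^{\Sigma}$. Setting $N^{\neq\ell}=\prod_{\ell'\neq\ell}N_{\ell'}$ and $E_{\ell}=\widetilde N^{\Sigma}/N^{\neq\ell}$, we obtain for each $\ell$ an extension $1\to N_{\ell}\to E_{\ell}\to G\to1$. I would then identify $E_p$ with $\mathbf E^{\{p\}}(X,G)_{\infty}$ and observe that $f_{\Sigma,p}(X,G)$ is exactly the map on conjugacy classes of sections induced by the projection $\widetilde N^{\Sigma}\to E_p$.

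First I would establish a product decomposition of the section sets. The natural map $\widetilde N^{\Sigma}\to\prod_{\ell}E_{\ell}$ lands in the fibre product of the $E_{\ell}$ over $G$, and it is an isomorphism onto that fibre product, since on the kernel it recovers the identification $N^{\Sigma}=\prod_{\ell}N_{\ell}$ and it is compatible with the projections to $G$. Consequently a section $s\colon G\to\widetilde N^{\Sigma}$ is the same datum as a tuple $(s_{\ell})_{\ell}$ of sections $s_{\ell}\colon G\to E_{\ell}$, the compatibility over $G$ being automatic. Moreover, because $N^{\Sigma}=\prod_{\ell}N_{\ell}$, two sections $s,s'$ are conjugate by an element of $N^{\Sigma}$ if and only if $s_{\ell}$ and $s'_{\ell}$ are conjugate by an element of $N_{\ell}$ for every $\ell$: one direction is immediate by projection, and for the other I would assemble the individual conjugators $\gamma_{\ell}\in N_{\ell}$ into a single $\gamma=(\gamma_{\ell})_{\ell}\in N^{\Sigma}$. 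This yields a bijection $S(\mathbf E^{\Sigma}(X,G)_{\infty})\cong\prod_{\ell\in\Sigma}S(E_{\ell})$ under which $f_{\Sigma,p}(X,G)$ is the projection onto the factor $S(E_p)$.

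To finish I would show that $S(E_{\ell})$ is a single point whenever $\ell\neq p$. Here $N_{\ell}$ is a pro-$\ell$ group and $G$ has order $p$ coprime to $\ell$, so the profinite version of the Schur--Zassenhaus theorem applies: the extension $E_{\ell}$ splits, and any two complements are conjugate by an element of $N_{\ell}$. Since conjugacy classes of sections of $E_{\ell}$ are precisely $N_{\ell}$-conjugacy classes of complements to $N_{\ell}$, the set $S(E_{\ell})$ is non-empty and consists of a single class. Therefore all factors with $\ell\neq p$ collapse to a point, and the projection $\prod_{\ell}S(E_{\ell})\to S(E_p)$ is a bijection; composing with the bijection of the previous paragraph shows $f_{\Sigma,p}(X,G)$ is a bijection.

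The step I expect to be the main obstacle is the coprime case, that is, the correct invocation of the profinite Schur--Zassenhaus theorem for a pro-$\ell$ kernel and a cyclic quotient of coprime order $p$, where one needs not merely the existence of a splitting but the conjugacy of \emph{all} complements by an element of the normal pro-$\ell$ subgroup; the pronilpotence (hence prosolubility) of $N_{\ell}$ is what guarantees the conjugacy statement. A second point requiring care, though routine, is verifying that the Sylow decomposition of $N^{\Sigma}$ is genuinely $G$-equivariant, so that each $N^{\neq\ell}$ is normal in $\widetilde N^{\Sigma}$ and the quotients $E_{\ell}$ are well defined.
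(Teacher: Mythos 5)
Your proof is correct and follows essentially the same route as the paper: both rest on the Sylow product decomposition of the pronilpotent kernel $\pi_1^{\Sigma}(X,x)/[\pi_1^{\Sigma}(X,x)]_{\infty}$, so that only the pro-$p$ factor can interact with the order-$p$ group $G$. The only difference is one of explicitness --- where the paper tersely asserts that the unique $p$-Sylow is a direct factor and that sections are detected there, you justify the collapse of the prime-to-$p$ factors by the profinite Schur--Zassenhaus theorem (existence and $N_{\ell}$-conjugacy of complements), which is exactly the right tool.
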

\begin{proof} Because the group $\pi_1^{\Sigma}(X,x)/[\pi_1^{\Sigma}(X,x)]_{\infty}$ is pro-nilpotent, that is, every quotient group by an open normal subgroup is nilpotent, the $p$-Sylow subgroup in every such quotient is unique and a normal subgroup. Hence the $p$-Sylow subgroup $P$ of $\pi_1^{\Sigma}(X,x)/[\pi_1^{\Sigma}(X,x)]_{\infty}$ in the sense of Serre (see page 5 of \cite{Se}) is unique, and it is a closed normal subgroup. Since in every finite nilpotent group the unique $p$-Sylow is a direct factor, the group $P$ maps isomorphically onto $\pi_1^{\{p\}}(X,x)/[\pi_1^{\{p\}}(X,x)]_{\infty}$ under the quotient map in (5.2.1). The image of $G$ must land in $P$ with respect to every section, so the forgetful map is surjective. The injectivity is also clear, since if the image of two sections of $\mathbf E^{\Sigma}(X,G)_{\infty}$ are conjugate with respect to the map in (5.2.1), they are conjugate in $P$. 
\end{proof}
\begin{thm} The map
$$s^{\infty,\Sigma}_{X,G}:X^G\longrightarrow S(\mathbf E^{\Sigma}(X,G)_{\infty})$$
is a bijection.
\end{thm}
\begin{proof} Every finite $p$-group is nilpotent so $S(\mathbf E^{\{p\}}(X,G))=S(\mathbf E^{\{p\}}(X,G)_{\infty})$ and $s^{\{p\}}_{X,G}=s^{\infty,\{p\}}_{X,G}$. So the claim holds for $\Sigma=\{p\}$ by Theorem 5.2. Now the general case follows from Lemma 5.3.
\end{proof}

\section{On the $2$-nilpotent section conjecture}

\begin{notn} Let $s:G\rightarrow\widetilde{\pi}/[\pi]_1$ be a section of the short exact sequence $\mathbf E_1$ in (1.0.2). There is a section $\widetilde s:G\rightarrow\widetilde\pi/[\pi]_2$ of $\mathbf E_2$ such that the composition of $\widetilde s$ and the quotient map: 
$$\widetilde{\pi}/[\pi]_2\longrightarrow\widetilde{\pi}/[\pi]_1$$
is in the conjugacy class of $s$ if and only if the extension
$$
\begin{CD}
1@>>>[\pi]_1/[\pi]_2@>>>
(\pi/[\pi]_2)\times_{\pi/[\pi]_1}G
@>>>G@>>>1
\end{CD}
$$
obtained by pulling back
\begin{equation}
\CD
1@>>>[\pi]_1/[\pi]_2@>>>\pi/[\pi]_2@>>>\pi/[\pi]_1@>>>1
\endCD
\end{equation}
along $s$ splits. When $\mathbf E$ splits the extensions $\mathbf E_n$ inherit splittings, which induce bijections between $H^1(G,\pi/[\pi]_n)$ and the conjugacy classes of sections of $\mathbf E_n$. Then for any $s$ as above, there exists an $\widetilde s$ as above if and only if the class of $s$ vanishes under
$$\delta_2 : H^1(G,\pi/[\pi]_1) \rightarrow H^2(G,[\pi]_1/[\pi]_2)$$
where $\delta_2$ is the boundary map in continuous group cohomology from the extension (6.1.1).
\end{notn}
\begin{defn} Let
$$[\cdot,\cdot]:\pi/[\pi]_1\otimes\pi/[\pi]_1\longrightarrow [\pi]_1/[\pi]_2$$
be the bilinear map given by $[y,z]=\overline y\overline z
\overline y^{-1}\overline z^{-1}$ where $\overline y,\overline z\in\pi/[\pi]_2$ maps to $y,z$, respectively, with respect to the quotient map $\pi/[\pi]_2
\rightarrow\pi/[\pi]_1$. (Since the choices of the lifts $\overline y$ and $\overline z$ differ by an element of the centre, this map is well-defined.) Let
$$\vee:H^1(G,\pi/[\pi]_1)\otimes H^1(G,\pi/[\pi]_1)\longrightarrow
H^2(G, [\pi]_1/[\pi]_2)$$
be the cup product furnished by the map $[\cdot,\cdot]$.
\end{defn}
\begin{prop} For every $y,z\in H^1(G,\pi/[\pi]_1)$ we have
$$\delta_2(y+z)=\delta_2(y)+\delta_2(z)+y\vee z.$$
\end{prop}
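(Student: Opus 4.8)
The plan is to unwind the definition of the boundary map $\delta_2$ at the level of explicit cochains and compare the $2$-cocycle attached to $y+z$ with the sum of those attached to $y$ and $z$, the discrepancy being exactly the cup product term. First I would fix a set-theoretic section $q:\pi/[\pi]_1\rightarrow\pi/[\pi]_2$ of the central extension (6.1.1), normalised so that $q$ restricted to the image of the splitting of $\mathbf E_1$ agrees with the given splitting (this uses the standing hypothesis in Notation 6.1 that $\mathbf E$, and hence $\mathbf E_1$, splits, so that $H^1(G,\pi/[\pi]_1)$ really is identified with conjugacy classes of sections). For a cocycle $y:G\rightarrow\pi/[\pi]_1$ representing a class in $H^1$, the boundary $\delta_2(y)$ is computed by lifting the corresponding section of $\mathbf E_1$ to a set-theoretic section of $\mathbf E_2$ via $q$ and measuring the failure of multiplicativity; concretely one gets the standard formula
$$
\delta_2(y)(g,h)=q(y(g))\,{}^{g}\!q(y(h))\,q(y(gh))^{-1}\in[\pi]_1/[\pi]_2,
$$
where ${}^{g}(\cdot)$ denotes the $G$-action coming from the chosen splitting.

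The key step is then a direct computation. For the sum cocycle $(y+z)(g)=y(g)z(g)$ (written multiplicatively in the abelian group $\pi/[\pi]_1$), I would expand $q(y(g)z(g))$ in terms of $q(y(g))$ and $q(z(g))$ using the defining commutator map $[\cdot,\cdot]$ of Definition 6.2: since $q$ is a lift of the projection and two lifts of the same element of $\pi/[\pi]_1$ differ by the central subgroup $[\pi]_1/[\pi]_2$, the element $q(y(g))q(z(g))q(y(g)z(g))^{-1}$ lies in the centre and, up to the symmetric "commutator factor", equals a controlled expression built from $[\cdot,\cdot]$. Substituting this expansion into the cocycle formula for $\delta_2(y+z)$ and collecting terms, the purely $q$-linear pieces reassemble into $\delta_2(y)+\delta_2(z)$ (using that $[\pi]_1/[\pi]_2$ is central, so the order of the central factors is irrelevant and the action ${}^{g}(\cdot)$ is additive on it), while the cross terms assemble into precisely the cup product $y\vee z$ built from the bilinear pairing $[\cdot,\cdot]$. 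Here it is essential that the target $[\pi]_1/[\pi]_2$ is abelian and central, so that all the reorderings needed to separate the $y$-only, $z$-only, and mixed contributions are legitimate.

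The main obstacle I anticipate is bookkeeping rather than conceptual: keeping careful track of the noncommutativity of $\pi/[\pi]_2$ while reordering factors, and verifying that the mixed terms that survive are exactly the cup product of $y$ and $z$ with respect to the pairing $[\cdot,\cdot]$ and in the correct order, rather than its transpose or a sum of both. In particular I would need to check the sign/ordering convention in the definition of $\vee$ against the order in which $q(y(g))$ and $q(z(g))$ appear, and confirm that any potential second mixed term (of the form $z\vee y$) either does not arise or is absorbed, since the bilinear map $[\cdot,\cdot]$ is antisymmetric modulo $[\pi]_2$. Once these orderings are pinned down, comparing the two normalised $2$-cocycles term by term yields the identity $\delta_2(y+z)=\delta_2(y)+\delta_2(z)+y\vee z$ on the level of cochains, and hence in $H^2(G,[\pi]_1/[\pi]_2)$, completing the proof.
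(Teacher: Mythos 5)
Your approach is correct and would give a complete, self-contained proof; note that the paper itself offers no argument for this proposition, deferring entirely to Zarkhin \cite{Za}, so your cochain-level computation is genuinely additional content rather than a rederivation of what is printed here. Two remarks to help you close the bookkeeping you rightly identify as the remaining work. First, writing $q(ab)=q(a)q(b)\epsilon(a,b)$ with $\epsilon(a,b)\in[\pi]_1/[\pi]_2$ central, the three $\epsilon$-terms appearing in $\delta_2(y+z)(g,h)$ are exactly $\epsilon(y(g),z(g))\cdot{}^{g}\epsilon(y(h),z(h))\cdot\epsilon(y(gh),z(gh))^{-1}$, i.e.\ the coboundary of the $1$-cochain $g\mapsto\epsilon(y(g),z(g))$; this is the precise reason the ``controlled expression built from $[\cdot,\cdot]$'' you allude to disappears in $H^2$, and it is cleaner to say so explicitly than to absorb it into the ``$q$-linear pieces.'' Second, the single transposition needed to separate the $y$- and $z$-factors is $q(z(g))\,{}^{g}q(y(h))=[z(g),{}^{g}y(h)]\,{}^{g}q(y(h))\,q(z(g))$, so the cross term you obtain on the nose is the cocycle $(g,h)\mapsto[z(g),{}^{g}y(h)]$, i.e.\ $z\vee y$ rather than $y\vee z$; these agree in $H^2$ because the pairing $[\cdot,\cdot]$ is antisymmetric while the cup product of two degree-one classes is anticommutative up to coboundary, which settles the ordering issue you flagged. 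With those two points pinned down, your term-by-term comparison of normalised $2$-cocycles does yield $\delta_2(y+z)=\delta_2(y)+\delta_2(z)+y\vee z$.
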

\begin{proof} See \cite{Za}.
\end{proof}
\begin{defn} In the rest of the paper we assume that $|G|=2$. For every abelian pro-group $L$ let $L\wedge L$ denote the quotient of $L\otimes L$ by the closure of the group $\langle\{a\otimes a|a\in L\}\rangle$.  Let $L$ be a pro-$G$ module. The pairing
$$\cup:H^1(G,L)\otimes H^1(G,L)\rightarrow H^2(G, L \wedge L)$$
induced by the cup product satisfies $a\cup a=0$ for every $a\in H^1(G,L)$, and hence it furnishes a natural map
$$\wedge:H^1(G,L)\wedge H^1(G,L)\longrightarrow H^2(G, L\wedge L).$$
\end{defn}
\begin{lemma} Assume that $L$ has no $2$-torsion. Then the map
$$\wedge:H^1(G, L)\wedge H^1(G, L)\longrightarrow H^2(G,L\wedge L)$$
is injective.
\end{lemma}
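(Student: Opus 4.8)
Throughout write $G=\{1,\sigma\}$ and put $N=1+\sigma\in\mathbb Z[G]$. The plan is to make all three objects completely explicit and to reduce injectivity to a single inclusion of subgroups of $L\wedge L$. First I would pass to a torsion‑free coefficient module: since $|G|=2$ the groups $H^i(G,-)$ are $2$‑primary for $i\geq1$, so they are unchanged if $L$ is replaced by its pro‑$2$ part, and likewise $H^2(G,L\wedge L)=H^2(G,L_2\wedge L_2)$ and $V:=H^1(G,L)=H^1(G,L_2)$. Hence I may assume $L$ is a torsion‑free $\mathbb Z_2$‑module, the hypothesis now reading simply that $L$ is torsion‑free. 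Using the standard $2$‑periodic resolution of $\mathbb Z$ over $\mathbb Z[G]$ I would record $H^1(G,L)=\ker N/(\sigma-1)L$ and $H^2(G,L\wedge L)=(L\wedge L)^G/N(L\wedge L)$, both $\mathbb F_2$‑vector spaces (for the first, $Na=0$ gives $2a=(1-\sigma)a$). Under the identification $H^2(G,-)\cong(-)^G/N(-)$ the class of a normalized $2$‑cocycle $f$ is $f(\sigma,\sigma)$; computing the cup product of Definition 6.4 on the inhomogeneous bar complex, the product of two $1$‑cocycles $a,b\in\ker N$ is the cocycle supported on $(\sigma,\sigma)$ with value $a\otimes\sigma b=-a\otimes b$. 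Thus $\wedge$ sends $[a]\wedge[b]$ to the class of $a\wedge b$ in $(L\wedge L)^G/N(L\wedge L)$ (the sign is irrelevant as the target is $2$‑torsion), and this class lies in $(L\wedge L)^G$ because $\sigma a\wedge\sigma b=(-a)\wedge(-b)=a\wedge b$.

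Next I would set $L^-=\ker N$ and $Q=L/L^-$. The first place the no‑$2$‑torsion hypothesis enters is to see that $G$ acts trivially on $Q$ (because $(\sigma-1)L\subseteq L^-$) and that $Q$ is again torsion‑free (if $2x\in L^-$ then $Nx$ is $2$‑torsion, hence $0$, so $x\in L^-$). Consequently $L^-$ is a direct summand of $L$, the natural map $L^-\wedge L^-\to L\wedge L$ is injective, and I obtain a filtration $L^-\wedge L^-\subseteq L\wedge L^-\subseteq L\wedge L$, where $L\wedge L^-$ denotes the subgroup generated by the $u\wedge z$ with $z\in L^-$, and whose top quotient is $Q\wedge Q$ with trivial $G$‑action.

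Under the quotient $L^-\twoheadrightarrow V=L^-/(\sigma-1)L$ the induced map $L^-\wedge L^-\to V\wedge V$ is surjective with kernel generated by the elements $b\wedge z$ for $b\in(\sigma-1)L$ and $z\in L^-$ (using $2L^-\subseteq(\sigma-1)L$). A direct computation gives $N(u\wedge z)=((1-\sigma)u)\wedge z$ for $z\in L^-$, so this kernel equals $N(L\wedge L^-)$; in particular $\wedge$ is well defined, being the map induced on $L^-\wedge L^-/N(L\wedge L^-)=V\wedge V$ by $L^-\wedge L^-\to(L\wedge L)^G/N(L\wedge L)$. Injectivity of $\wedge$ is therefore equivalent to the assertion
$$N(L\wedge L)\cap(L^-\wedge L^-)\subseteq N(L\wedge L^-),$$
the reverse inclusion being clear.

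The decisive step, and the one where torsion‑freeness is really consumed, is this last inclusion. Given $\eta=N\xi$ with $\eta\in L^-\wedge L^-$, I would project $\xi$ to the top quotient $Q\wedge Q$ of the filtration: since $\sigma$ acts trivially there, the image of $\eta=N\xi$ is $2\bar\xi$, while $\eta$ lies in $L^-\wedge L^-\subseteq L\wedge L^-$ and so has zero image. Thus $2\bar\xi=0$ in the torsion‑free group $Q\wedge Q$, which forces $\bar\xi=0$, i.e. $\xi\in L\wedge L^-$, whence $\eta=N\xi\in N(L\wedge L^-)$, as required. I expect the genuine effort to lie not in this final computation but in the preparatory identifications — the reduction to a torsion‑free $\mathbb Z_2$‑module, the explicit cup‑product formula $[a]\wedge[b]\mapsto[a\wedge b]$, and the construction of the filtration with its trivial‑action, torsion‑free top quotient — after which injectivity of $\wedge$ drops out of a single short argument about the norm map.
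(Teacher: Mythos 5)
Your argument is correct, but it cannot be compared step-by-step with the paper's, because the paper offers no proof at all: it simply cites Lemma~3.3 of Wickelgren's \emph{$2$-nilpotent real section conjecture}. What you have produced is therefore a self-contained substitute for that citation. The skeleton is sound: the reduction to the pro-$2$ part, the identifications $H^1(G,L)=\ker N/(\sigma-1)L$ and $H^2(G,-)=(-)^G/N(-)$, the cup-product formula $[a]\wedge[b]\mapsto[a\wedge b]$, the identification $V\wedge V=(L^-\wedge L^-)/N(L\wedge L^-)$ via the computation $N(u\wedge z)=((1-\sigma)u)\wedge z$, and the final norm argument on the quotient $Q\wedge Q$ all check out; in particular the reduction of injectivity to the single inclusion $N(L\wedge L)\cap(L^-\wedge L^-)\subseteq N(L\wedge L^-)$ is a clean way to isolate where the hypothesis is used. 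Your route, which exploits only the filtration $L^-\wedge L^-\subseteq L\wedge L^-\subseteq L\wedge L$ attached to $L^-=\ker N$, avoids the classification of $\mathbb Z_2[\mathbb Z/2]$-lattices into trivial, sign and regular summands on which the more computational treatments of this statement rest, and it makes visible exactly which two consequences of $2$-torsion-freeness are consumed (that $Q=L/L^-$ is torsion-free, and that $Q\wedge Q$ is). Two points deserve a sentence each in a polished write-up: the splitting $L\cong L^-\oplus Q$ and the torsion-freeness of $Q\wedge Q$ both invoke the fact that a torsion-free abelian pro-$2$ group is a free $\mathbb Z_2$-module; this is immediate when $L$ is topologically finitely generated, which is the only case the paper uses ($L=\pi_1^{\Sigma}(X,x)^{ab}$), but should be flagged if the lemma is meant in the stated generality. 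Likewise the opening reduction tacitly uses completed tensor products so that the cross-terms $L_2\mathbin{\hat\otimes}L_{p}$ ($p$ odd) vanish; that is the right convention here, but worth saying.
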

\begin{proof} This is Lemma 3.3 of \cite{Wi}. 
\end{proof}
\begin{defn} Let
$$\{\cdot\}:\pi/[\pi]_1\wedge\pi/[\pi]_1\longrightarrow [\pi]_1/[\pi]_2$$
be the unique map with the property:
$$\{y\wedge z\}=[y,z],\quad(\forall y,\forall z\in\pi/[\pi]_1).$$
For every $i\in\mathbb N$ let
$$H^i(\{\cdot\}):H^i(G,\pi/[\pi]_1\wedge\pi/[\pi]_1)\longrightarrow
H^i(G,[\pi]_1/[\pi]_2)$$
induced by $\{\cdot\}$.
\end{defn}
\begin{lemma} The order of the kernel of the map:
$$H^2(\{\cdot\}):H^2(G,\pi^{\Sigma}_1(X,x)^{ab}\wedge\pi^{\Sigma}_1(X,x)^{ab})
\longrightarrow H^2(G,[\pi_1(X,x)^{\Sigma}]_1/[\pi_1(X,x)^{\Sigma}]_2)$$
is at most $2$.
\end{lemma}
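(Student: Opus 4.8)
The plan is to realise $H^2(\{\cdot\})$ as one of the maps in a long exact cohomology sequence and to identify its kernel with the image of a cohomology group that I can compute directly. Write $\pi=\pi_1^{\Sigma}(X,x)$, so that $\pi^{ab}=\pi_1^{\Sigma}(X,x)^{ab}$ and $[\pi]_1/[\pi]_2=[\pi_1(X,x)^{\Sigma}]_1/[\pi_1(X,x)^{\Sigma}]_2$. The starting point is the structure of $[\pi]_1/[\pi]_2$. Since $X$ has genus $g_X\geq1$, the discrete fundamental group is a one-relator group with relation $\prod_{i=1}^{g_X}[a_i,b_i]=1$; in the free group the commutator map identifies the first lower central quotient with the second exterior power of the abelianisation, and the single relation becomes the symplectic (polarisation) class $\omega=\sum_i a_i\wedge b_i$, which is nonzero as $g_X\geq1$. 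Passing to the pro-$\mathcal C_{\Sigma}$ completion, which commutes with these finitely many finite quotients, yields a short exact sequence of $G$-modules
$$
\begin{CD}
0@>>>\mathbb Z_{\Sigma}@>>>\pi^{ab}\wedge\pi^{ab}@>{\{\cdot\}}>>[\pi]_1/[\pi]_2@>>>0,
\end{CD}
$$
where $\mathbb Z_{\Sigma}$ denotes the $\mathcal C_{\Sigma}$-completion of $\mathbb Z$ and the left-hand inclusion carries a generator to $\omega$.

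The next point is that $G$ acts trivially on the kernel $\mathbb Z_{\Sigma}\cong\langle\omega\rangle$. The map $\{\cdot\}$ is induced by the commutator and is therefore $G$-equivariant, so $\langle\omega\rangle$ is a $G$-submodule; since it has $\mathbb Z_{\Sigma}$-rank one, the generator of $G$ acts on it by $\pm1$. But $\omega$ is the class of the intersection form on $H_1(X)$, which is preserved by every orientation-preserving self-homeomorphism of $X$. As the generator of $G$ acts through an algebraic, hence holomorphic and orientation-preserving, automorphism, it fixes $\omega$, so $G$ acts trivially on $\mathbb Z_{\Sigma}$.

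Now I would feed the displayed sequence into the long exact sequence of $G$-cohomology, the relevant portion of which reads
$$
\begin{CD}
H^2(G,\mathbb Z_{\Sigma})@>>>H^2(G,\pi^{ab}\wedge\pi^{ab})@>{H^2(\{\cdot\})}>>H^2(G,[\pi]_1/[\pi]_2).
\end{CD}
$$
By exactness the kernel of $H^2(\{\cdot\})$ is the image of $H^2(G,\mathbb Z_{\Sigma})$, so it suffices to bound the order of the latter. Since $|G|=2$ acts trivially on $\mathbb Z_{\Sigma}$, the standard computation of the cohomology of a cyclic group gives $H^2(G,\mathbb Z_{\Sigma})\cong\mathbb Z_{\Sigma}/2\mathbb Z_{\Sigma}$. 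Because the prime $2$ dividing $|G|$ lies in $\Sigma$ while $2$ is a unit in $\mathbb Z_{\ell}$ for every odd $\ell\in\Sigma$, one gets $\mathbb Z_{\Sigma}/2\mathbb Z_{\Sigma}\cong\mathbb Z/2$. Hence the kernel of $H^2(\{\cdot\})$, being a quotient of a group of order $2$, has order at most $2$.

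The only genuinely nontrivial input is the first paragraph: establishing that the kernel of $\{\cdot\}$ is exactly the rank-one module $\langle\omega\rangle$ and that this description survives $\mathcal C_{\Sigma}$-completion. Once the short exact sequence is in place, the remainder is a formal consequence of exactness together with the elementary cohomology computation for $\mathbb Z/2$.
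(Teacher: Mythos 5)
Your proof is correct and follows essentially the same route as the paper: the same short exact sequence $0\to M\to\pi^{ab}\wedge\pi^{ab}\to[\pi]_1/[\pi]_2\to0$ coming from the surface-group relator, with $M\cong\prod_{l\in\Sigma}\mathbb Z_l$ generated by the symplectic class, followed by the same long exact cohomology sequence reducing everything to the bound $|H^2(G,M)|\leq 2$. The only divergence is in that last elementary step: you compute $H^2(G,M)$ directly by arguing that $G$ acts trivially on the class $\omega$ (since the action is holomorphic, hence orientation-preserving), whereas the paper avoids having to determine the $G$-action on $M$ at all by using the multiplication-by-$2$ sequence to bound $|H^2(G,M)|$ by $|H^1(G,M/2M)|$, where triviality of the action on $M/2M\cong\mathbb F_2$ is automatic.
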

\begin{proof} (Compare with Lemma 3.4 of \cite{Wi}.) Let $g=g_X$ be the genus of $X$. It is well-known that $\pi_1(X,x)$ has a presentation with generators $x_1,x_2,\ldots,x_g,y_1,y_2,\ldots,y_g$ subject to the single relation:
$$x_1y_1x_1^{-1}y_1^{-1}x_2y_2x_2^{-1}y_2^{-1}\cdots x_gy_gx_g^{-1}y_g^{-1}=1.$$
Therefore there is a short exact sequence of pro-$G$ modules:
$$\CD1\rightarrow M\longrightarrow \pi^{\Sigma}_1(X,x)^{ab}\wedge\pi^{\Sigma}_1(X,x)^{ab}
@>\{\cdot\}>>
[\pi_1(X,x)^{\Sigma}]_1/[\pi_1(X,x)^{\Sigma}]_2\rightarrow 1\endCD$$
where as a pro-group $M$ is isomorphic to $\prod_{l\in\Sigma}\mathbb Z_l$. The short exact sequence above furnishes a cohomological exact sequence:
$$
H^2(G,M)\rightarrow H^2(G,\pi^{\Sigma}_1(X,x)^{ab}\wedge\pi^{\Sigma}_1(X,x)^{ab})
\rightarrow H^2(G,[\pi_1(X,x)^{\Sigma}]_1/[\pi_1(X,x)^{\Sigma}]_2).
$$
Hence it will be enough to show that the order of $H^2(G,M)$ is at most $2$.
Since $M$ is torsion-free, there is a short exact sequence of pro-$G$ modules:
$$\CD1@>>>M@>{\cdot 2}>>M@>>>M/2M@>>>1,\endCD$$
which induces a cohomological exact sequence:
$$\CD H^1(G,M/2M)@>>> H^2(G,M)@>{\cdot2}>> H^2(G,M).\endCD$$
Since the group $H^2(G,M)$ is $2$-torsion, it will be enough to show that the order of $H^1(G,M/2M)$ is at most $2$. As a $G$-module $M/2M$ is isomorphic to the trivial $G$-module $\mathbb F_2$. The claim follows.
\end{proof}
\begin{proof}[Proof of Theorem 1.3] This argument is almost exactly the same as the proof of Theorem 3.5 of \cite{Wi}. We only need to prove the claim when $S(\mathbf E^{\Sigma}(X,G)_1)$ is non-empty. In this case the set $X^G$ is non-empty by Proposition 4.4. List the elements of $X^G$ as $x_0,x_1,\ldots,x_n,x_{n+1}$. For every $i=1,2,\ldots,n+1$ let
$$s_i=s^{1,\Sigma}_{X,G}(x_i)-s^{1,\Sigma}_{X,G}(x_0)\in H^1(G,\pi^{\Sigma}_1(X,x)^{ab}).$$
By Theorem 4.7 the elements 
$$\{s_i|\ i=1,2,\ldots,n\}$$
form a basis of the $\mathbb F_p$-vector space $H^1(G,\pi^{\Sigma}_1(X,x)^{ab})$, and
\begin{equation}
s_{n+1}=s_1+s_2+\cdots+s_n.
\end{equation}
For every $I\subseteq\{1,2,\ldots,n\}$ let
$$s_I=\sum_{i\in I}s_i.$$
By Proposition 6.3 we have:
$$\delta_2(s_I)=\sum_{i\in I}\delta_2(s_i)+
\sum_{\substack{i<j\\i,j\in I}}s_i\vee s_j.$$
Since every $s_i$ has a lift to a section of $\mathbf E^{\Sigma}(X,G)_2$ of the form $s^{2,\Sigma}_{X,G}(x_i)-s^{2,\Sigma}_{X,G}(x_0)$ we have $\delta_2(s_i)=0$ for every $i=1,\ldots,n+1$. If $|I|>1$ then the element:
$$\sum_{\substack{i<j\\i,j\in I}}s_i\wedge s_j\in H^2(G,\pi^{\Sigma}_1(X,x)^{ab}
\wedge\pi^{\Sigma}_1(X,x)^{ab})$$
is non-zero by Lemma 6.5. When $I=\{1,2,\dots,n\}$ we have $\delta_2(s_I)=0$ by (6.8.3) and hence $\sum_{\substack{i<j\\i,j\in I}}
s_i\wedge s_j$ is the unique non-zero element of the kernel of the map $H^2(\{\cdot\})$ by Lemma 6.8. Therefore we get that $\delta_2(s_I)=0$ if and only if $I$ is the empty set, it has cardinality one or $I=\{1,2,\dots,n\}$. Since every element of $H^1(G,\pi^{\Sigma}_1(X,x)^{ab})$ can be written in the form $s_I$ for some $I$ as above, the claim is now clear.
\end{proof}

\end{document}